\documentclass[11pt,reqno]{amsart}
\usepackage[margin=1in]{geometry}

\usepackage{amsthm, amsmath, amssymb,booktabs,xcolor,graphicx}
\usepackage{tikz}
\usepackage{bbm}
\usepackage{listings}
\usepackage{mathtools}
\usepackage{xcolor}
\usepackage{appendix}
\usepackage{subfig}
\usepackage{mathscinet}
\usepackage{fourier}
\makeatletter
\def\resetMathstrut@{%
  \setbox\z@\hbox{%
    \mathchardef\@tempa\mathcode`\(\relax
    \def\@tempb##1"##2##3{\the\textfont"##3\char"}%
    \expandafter\@tempb\meaning\@tempa \relax
  }%
  \ht\Mathstrutbox@1.2\ht\z@ \dp\Mathstrutbox@1.2\dp\z@
}
\makeatother
\addtolength{\footskip}{17pt}

\definecolor{codegreen}{rgb}{0,0.6,0}
\definecolor{codegray}{rgb}{0.5,0.5,0.5}
\definecolor{codepurple}{rgb}{0.58,0,0.82}
\definecolor{backcolour}{rgb}{0.95,0.95,0.92}

\lstdefinestyle{mystyle}{
  backgroundcolor=\color{backcolour},   commentstyle=\color{codegreen},
  keywordstyle=\color{magenta},
  numberstyle=\tiny\color{codegray},
  stringstyle=\color{codepurple},
  basicstyle=\ttfamily\footnotesize,
  breakatwhitespace=false,
  breaklines=true,
  captionpos=b,
  keepspaces=true,
  numbers=left,
  numbersep=5pt,
  showspaces=false,
  showstringspaces=false,
  showtabs=false,
  tabsize=2
}

\lstset{style=mystyle}



\usepackage[colorlinks=true,allcolors=blue,backref=page]{hyperref}

\usepackage[utf8]{inputenc}
\usepackage[T1]{fontenc}

\usepackage[textsize=small,backgroundcolor=orange!20]{todonotes}

\usepackage{url}
\usepackage{dutchcal}


\usepackage[noabbrev,capitalize,nameinlink]{cleveref}
\crefformat{equation}{(#2#1#3)}
\crefrangeformat{equation}{(#3#1#4) to~(#5#2#6)}
\crefmultiformat{equation}{(#2#1#3)}
{ and~(#2#1#3)}{, (#2#1#3)}{ and~(#2#1#3)}

\usepackage[color,final]{showkeys} 

\colorlet{refkey}{orange!20}
\colorlet{labelkey}{blue!30}

\newtheorem{theorem}{Theorem}
\newtheorem{proposition}[theorem]{Proposition}
\newtheorem{lemma}[theorem]{Lemma}

\newtheorem{corollary}[theorem]{Corollary}

\newtheorem*{question*}{Question}

\theoremstyle{definition}
\newtheorem{definition}[theorem]{Definition}
\newtheorem{problem}[theorem]{Problem}

\newtheorem*{definition*}{Definition}

\theoremstyle{remark}

\newcommand{\abs}[1]{\left\lvert#1\right\rvert}

\newcommand{\norm}[1]{\left\lVert#1\right\rVert}

\newcommand{\sang}[1]{\langle #1 \rangle}

\newcommand{\CC}{\mathbb{C}}
\newcommand{\EE}{\mathbb{E}}

\newcommand{\cB}{\mathcal{B}}

\newcommand{\HS}{\mathrm{HS}}

\newcommand{\mb}{\mathbb}

\newcommand{\mr}{\mathrm}
\newcommand{\ol}{\overline}

\newcommand{\wh}{\widehat}

\renewcommand{\epsilon}{\varepsilon}
\newcommand{\e}{\varepsilon}
\renewcommand{\le}{\leqslant}
\renewcommand{\ge}{\geqslant}

\renewcommand{\setminus}{\smallsetminus}
\newcommand{\C}{\mathbb{C}}
\newcommand{\R}{\mathbb{R}}
\newcommand{\U}{\mathbb{U}}
\newcommand{\h}{\mathcal{h}}
\renewcommand{\subset}{\subseteq}
\newcommand{\1}{\mathbf 1}
\newcommand{\N}{\mathbb{N}}
\newcommand{\Z}{\mathbb{Z}}
\newcommand{\Q}{\mathbb{Q}}
\newcommand{\K}{\mathbb{K}}
\newcommand{\ud}[0]{\,\mathrm{d}}
\newcommand{\E}{\mathbb{E}}
\newcommand{\M}{\mathsf{M}}

\newcommand{\f}{\varphi}
\newcommand{\Tr}{\mathrm{Tr}}
\newcommand{\g}{\mathcal{g}}
\renewcommand{\i}{\mathsf{i}}
\usepackage{appendix}



\title{Cayley graphs that have a quantum ergodic eigenbasis}

\author[Naor]{Assaf Naor}
\author[Sah]{Ashwin Sah}
\author[Sawhney]{Mehtaab Sawhney}
\author[Zhao]{Yufei Zhao}

\thanks{Naor was supported by NSF grant DMS-2054875 and a Simons Investigator award.
Sah and Sawhney were supported by NSF Graduate Research Fellowship Program DGE-1745302.
Sah was supported by the PD Soros Fellowship.
Zhao was supported by NSF CAREER Award DMS-2044606, a Sloan Research Fellowship, and the MIT Solomon Buchsbaum Fund.}

\address{Department of Mathematics, Princeton NJ 08544-1000}
\email{naor@math.princeton.edu}

\address{Department of Mathematics, Massachusetts Institute of Technology, Cambridge, MA 02139, USA}
\email{\{asah,msawhney,yufeiz\}@mit.edu}

\begin{document}

\begin{abstract}
We investigate  which finite Cayley graphs admit a quantum ergodic eigenbasis, proving that this holds for any Cayley graph on a group of size $n$ for which the sum of the dimensions of its irreducible representations is $o(n)$, yet there exist Cayley graphs that do not have any quantum ergodic eigenbasis.
\end{abstract}

\maketitle

\section{Introduction}\label{sec:introduction}

We will prove here the following theorem; see Theorem~\ref{thm:not-quantum-ergodic} for a companion impossibility result.

\begin{theorem} \label{thm:qe-2}
There exists an absolute constant $c>0$ with the following property. Suppose that $\e>0$ and let  $G$ be a finite group whose irreducible representations have total dimension at most $c \epsilon^2 \abs{G}$, i.e.,
\begin{equation}\label{eq:total dimension condition}
\sum_{\sigma\in \wh G} d_\sigma\le c\e^2 \abs{G}.
\end{equation}
Then, any Cayley graph on $G$ has an orthonormal eigenbasis $\cB$ consisting of functions $\phi\colon G \to \CC$ satisfying
\begin{equation}\label{eq:2QE}
\forall f \colon G \to \CC,\qquad \EE_{\phi \in \cB} \Big[\big| \EE_{x \in G} \big[f(x) |\phi(x)|^2\big] - \mb{E} f\big|\Big] \le \epsilon\|f\|_2.
\end{equation}
\end{theorem}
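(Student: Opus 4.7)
The plan is to build $\cB$ by choosing a Haar-random orthonormal basis on each eigenspace of the adjacency operator, prove \cref{eq:2QE} for each fixed $f$ in expectation over the random basis via a second-moment bound, then derandomize to obtain a single basis that works for all $f$ simultaneously.

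Let $A=\sum_{s\in S}R_s$ denote the adjacency operator. Since $A$ commutes with every left translation, it preserves the Peter--Weyl isotypic decomposition $L^2(G)=\bigoplus_{\sigma\in\wh G} M_\sigma$ with $M_\sigma\cong V_\sigma\otimes V_\sigma^*$, and Schur's lemma forces $A$ to act on $M_\sigma$ as $I_{V_\sigma}\otimes T_\sigma$ for some $T_\sigma\in\on{End}(V_\sigma^*)$. Diagonalizing each $T_\sigma$ produces an eigenspace decomposition of $A$ with at most $\sum_\sigma d_\sigma\le c\e^2|G|$ distinct eigenvalues, and every eigenspace $E_\lambda$ is left-$G$-invariant. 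In particular, its reproducing kernel satisfies $K_\lambda(x,x)=N_\lambda:=\dim E_\lambda$ for all $x\in G$.

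For a fixed mean-zero $f\colon G\to\CC$, set $X_\phi:=\EE_x f(x)|\phi(x)|^2$ and $A_\lambda:=P_\lambda M_f P_\lambda$. The constant-diagonal property yields $\on{Tr}(A_\lambda)=0$ and, via a direct trace computation, $\|A_\lambda\|_{\HS}^2\le\|M_f P_\lambda\|_{\HS}^2=N_\lambda\|f\|_2^2$. Combined with the Weingarten identity $\EE_{U\sim\mr{Haar}(U(N))}((U^*AU)_{kk})^2=\|A\|_{\HS}^2/(N(N+1))$ for trace-zero Hermitian $A$, summation over basis elements of $E_\lambda$ and then over $\lambda$ gives
\[
\EE_\cB \EE_\phi |X_\phi|^2=\frac{1}{|G|}\sum_\lambda\frac{\|A_\lambda\|_{\HS}^2}{N_\lambda+1}\le\frac{\#\{\lambda\}}{|G|}\|f\|_2^2\le c\e^2\|f\|_2^2.
\]
Cauchy--Schwarz then yields $\EE_\cB\EE_\phi|X_\phi|\le\sqrt{c}\,\e\,\|f\|_2$ for every fixed mean-zero $f$, which establishes \cref{eq:2QE} in expectation over the random basis (the non-mean-zero case reduces to the mean-zero case since $X_\phi-\EE f=\EE_x(f(x)-\EE f)|\phi(x)|^2$).

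The main obstacle is producing a \emph{single} basis that satisfies \cref{eq:2QE} for every $f$ simultaneously. I would reformulate via the positive semi-definite kernel operator $S_\cB$ on $L^2(G)$ whose kernel is $R_\cB(x,y):=\EE_\phi(|\phi(x)|^2-1)(|\phi(y)|^2-1)$, so that $\sup_{\|f\|_2\le 1}\EE_\phi|X_\phi-\EE f|^2=\|S_\cB\|_{\mr{op}}$, and Cauchy--Schwarz then reduces \cref{eq:2QE} to finding $\cB$ with $\|S_\cB\|_{\mr{op}}\le C\e^2$. The above second-moment calculation shows $\|\EE_\cB S_\cB\|_{\mr{op}}\le c\e^2$, and by invariance of Haar measure $\EE_\cB S_\cB$ commutes with left translation, hence is a convolution operator whose norm decomposes via the Peter--Weyl basis into estimates on individual irreducibles of $G$. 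Promoting this from the averaged operator to an individual basis would require either matrix concentration (noncommutative Khintchine / matrix Bernstein applied to the rank-one summands $g_\phi g_\phi^*$ coming from each eigenspace) or, alternatively, replacing the random Haar model by a deterministic symmetrization of the matrix-coefficient basis under the left $G$-action so that $S_\cB=\EE_\cB S_\cB$ by construction. Handling this derandomization cleanly is where I expect the bulk of the technical work to lie.
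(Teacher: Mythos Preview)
Your second-moment computation for fixed $f$ is correct and parallels the paper's analysis: the paper also builds $\cB$ from Haar-random unitaries (on the $d_\rho$-dimensional blocks indexed by pairs $(\rho,j)$ with $\rho\in\wh G$ and $j\in\{1,\ldots,d_\rho\}$ rather than on the full adjacency-eigenspaces, though this distinction is immaterial for the expectation bound), and reaches the same estimate $\EE_\cB\EE_\phi|X_\phi|^2\le\big(\tfrac{1}{|G|}\sum_\rho d_\rho\big)\|f\|_2^2$.

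The genuine gap is the derandomization, and neither of the routes you sketch is what the paper does. The paper uses \emph{scalar} concentration plus a net, not matrix concentration. For fixed $f$, the map $U\mapsto\EE_{\phi\in\cB_U}|X_\phi|$ is Lipschitz on the product group $\Gamma=\prod_{(\rho,j)}\U(d_\rho)$, equipped with the Pythagorean product of the rescaled geodesic metrics $\sqrt{d_\rho}\,\g_{d_\rho}$; the logarithmic Sobolev inequality on each $\U(d_\rho)$ tensorizes, giving sub-Gaussian concentration on $\Gamma$ with exponent proportional to $\sum_{(\rho,j)}d_\rho=\sum_\rho d_\rho^2=|G|$. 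The resulting failure probability $e^{-\Omega(|G|)}$ survives a union bound over a net of size $e^{O(|G|)}$ in the $L^2$ unit ball, producing a single $\cB$ that works uniformly in $f$. No operator-norm inequality enters anywhere.

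By contrast, controlling $\|S_\cB\|_{\mr{op}}$ via matrix Bernstein runs into obstacles: the rank-one summands $g_\phi g_\phi^*$ have $\|g_\phi\|_2^2=\|\phi\|_4^4-1$, which is not a priori small, and even after grouping by eigenspace one expects a $\log|G|$ dimensional factor that is absent from the target statement. The deterministic-symmetrization idea is also unpromising as stated, since the standard matrix-coefficient basis has $|\phi(e)|^2\in\{0,d_\rho\}$, and left translation merely moves this spike around rather than flattening it. The missing ingredient is therefore the Lipschitz-plus-log-Sobolev concentration on products of unitary groups; once that is in hand, the passage to a single basis is a routine net argument.
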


In the statement  of Theorem~\ref{thm:qe-2}, $\wh G$ is the set of irreducible unitary representations of a finite group $G$ and the dimension of each $\sigma \in \wh G$ is denoted $d_\sigma$. The  representation theory of finite groups that we will use below is rudimentary; see e.g.~\cite{HR70,Sim96}. A Cayley graph on $G$ is a  graph whose vertex set is $G$ such that there is a symmetric subset $\mathfrak{S}\subset G$ that generates $G$ and $\{g,h\}\subset G$ forms an edge if and only if $gh^{-1}\in \mathfrak{S}$.

In  Theorem~\ref{thm:qe-2} and throughout what follows, we will adhere to the convention that a finite set $X$ is only equipped with the uniform probability measure; thus, all expectations, scalar products and $L_p$ norms of functions from $X$ to $\C$ will be with respect to this measure, i.e., for every $f,g\colon X\to \C$ and $1\le p\le \infty$,
\begin{equation}\label{eq:normalization}
\EE f = \EE_{x\in X}\big[f(x)\big]=\frac{1}{\abs{X}} \sum_{x\in X} f(x)\qquad\mathrm{and}\qquad \sang{f,g} = \EE_{x \in X} \Big[\ol{f(x)}g(x)\Big]\qquad \mathrm{and}\qquad \norm{f}_p = \Big(\EE \big[|f|^p\big]\Big)^{\frac{1}{p}}.
\end{equation}
So, a set of functions $\phi_1, \dots, \phi_{\abs{X}} \colon X \to \CC$ is an orthonormal basis if $\|\phi_j\|_2=1$ and $\langle \phi_j, \phi_k\rangle = 0$ for every distinct $j,k\in \{1,\ldots,|X|\}$. If $X$ is a graph, then  we say that $\cB=\{\phi_1, \dots, \phi_{\abs{X}}\}$ is an orthonormal eigenbasis of $X$ if it is an orthonormal basis consisting of eigenfunctions of the adjacency matrix of $X$.

Theorem~\ref{thm:qe-2} is a finitary statement in the spirit of quantum ergodicity on manifolds, e.g.~\v{S}nirel\cprime man's classical theorem~\cite{Sni74,CdV85,Zel87}. Investigations along these lines include notably~\cite{AL15}, and we refer also to~\cite{Ana17,AS17}  and the survey~\cite{Ana18} for background and motivation. From these works, we extract the following definition.

\begin{definition}[quantum ergodic basis]\label{def:qe}
Given a finite set $X$ and $\e>0$, we say that an orthonormal basis $\cB$ of functions $\phi\colon X \to \CC$
is \emph{$\epsilon$-quantum ergodic} if
\begin{equation}
\label{eq:qe}
\forall f \colon X \to \CC,\qquad \EE_{\phi \in \cB} \Big[\big| \EE_{x \in X} \big[f(x) |\phi(x)|^2\big] - \mb{E} f\big|\Big] \le \epsilon\|f\|_\infty.
\end{equation}
\end{definition}

The only difference between the conclusion~\eqref{eq:2QE} of Theorem~\ref{thm:qe-2}  and the requirement~\eqref{eq:qe} of Definition~\ref{def:qe} is that the quantity $\|f\|_2$ in the right hand side of~\eqref{eq:2QE}  is replaced in the right hand side of~\eqref{eq:qe} by the larger quantity $\|f\|_\infty$.  Therefore,  Theorem~\ref{thm:qe-2}  implies that  any Cayley graph of a finite group whose irreducible representations have total dimension at most $c \epsilon^2 \abs{G}$ has an $\epsilon$-quantum ergodic eigenbasis. The stronger conclusion~\eqref{eq:2QE} of Theorem~\ref{thm:qe-2} can be significantly stronger when e.g.~in~\eqref{eq:2QE} we take $f$ to be the indicator of a small nonempty subset $S$ of $G$, as in this case $\|f\|_\infty=1$  while $\|f\|_2=\sqrt{|S|/|G|}$.

The reason why we formulated Definition~\ref{def:qe}  using the $L_\infty$ norm of $f$ rather than its $L_2$ norm is first and foremost because this is how the subject is treated in the literature, but also because the following impossibility result rules out even the weaker requirement~\eqref{eq:qe}.

\begin{theorem}\label{thm:not-quantum-ergodic}
There are arbitrarily large Cayley graphs that do not admit any $c$-quantum ergodic orthonormal eigenbasis, where $c>0$ is a universal constant.
\end{theorem}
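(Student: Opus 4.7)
My plan is to construct arbitrarily large Cayley graphs witnessing the failure of quantum ergodicity, starting from the observation that Theorem~\ref{thm:qe-2} is non-vacuous only when $\sum_\sigma d_\sigma/\abs{G} = o(1)$, so any witness must be sought in the complementary regime where this ratio is bounded below by a positive constant. Abelian groups saturate the ratio but are automatically quantum ergodic via their characters (which satisfy $\abs{\chi}^2 \equiv 1$), forcing the construction to use non-abelian groups in which no character-like eigenbasis exists. A convenient candidate is the semi-direct product family $G = A \rtimes B$, with $A$ abelian of large order and $B$ a fixed small non-abelian group acting non-trivially on $A$, for which Mackey's machine gives $\sum_\sigma d_\sigma/\abs{G} \to 1/\abs{B}$ as $\abs{A}\to\infty$.

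The first step is to select a symmetric generating set $\mathfrak{S}$ so that a designated eigenspace $E$ of the adjacency matrix $A_X$ covers a positive fraction of $L^2(G)$ and is structurally constrained. Because $A_X$ commutes with the left regular representation, $E$ decomposes into isotypic components of $G$; within the $\sigma$-isotypic component the action of $A_X$ is $I_{V_\sigma}\otimes M_\sigma$ where $M_\sigma = \sum_{s\in\mathfrak{S}}\sigma^\ast(s)$, so each orthonormal basis of the resulting eigenspace consists of matrix coefficients $\phi_v(g) = \sqrt{d_\sigma}\sang{v,\sigma(g)w}$ for an orthonormal basis $\{v\}$ of $V_\sigma$ and a chosen eigenvector $w$ of $M_\sigma$. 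The second step is to invoke the duality
\[
\sup_{\snorm{f}_\infty\leq 1}\EE_{\phi\in\cB}\big|\sang{f,\abs{\phi}^2-1}\big|
= \sup_{\sabs{\alpha_\phi}\leq 1}\Big\|\EE_{\phi\in\cB}\alpha_\phi\big(\abs{\phi}^2-1\big)\Big\|_1,
\]
take the weights $\alpha_\phi$ to be characters of $A$ lifted to $G$ and supported on $E$, and thereby reduce the question to lower bounding an explicit matrix-coefficient $L^1$ norm depending only on the orthonormal basis of $V_\sigma$ and on the eigenvector $w$.

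The main obstacle, which I expect to be technically hardest, is to rule out the possibility that a clever orthonormal basis of $V_\sigma$ restores quantum ergodicity by cancellation. Heisenberg-type groups $H_p$ illustrate this danger: their Schr\"odinger representation admits a full system of mutually unbiased bases, which produce a unimodular matrix-coefficient eigenbasis and make every isotypic component quantum ergodic. The strategy for defeating such a rescue is to arrange that every eigenvector $w$ of $M_\sigma$ is a proper superposition of several basis vectors of $V_\sigma$, producing an explicit combinatorial obstruction (an odd-cycle $2$-coloring problem, in the Heisenberg toy model) to any unimodular basis; the real work is in upgrading this pointwise non-existence into a uniform $L^1$ lower bound that survives randomization of the basis and dilution by the remaining eigenspaces. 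I expect to need either a careful choice of $G$ and $\sigma$ whose representation-theoretic parameters rule out a mutually-unbiased-bases rescue for dimension reasons, or a finer duality argument that reduces the lower bound to an intrinsic invariant of the $G$-orbit of $w$ that is insensitive to basis choice.
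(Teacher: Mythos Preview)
Your proposal is a strategy outline with an explicitly unresolved core: you flag the ``mutually unbiased bases rescue'' as the main obstacle and admit you do not yet know how to defeat it. The Heisenberg example you cite shows the danger is real, and neither of your suggested remedies (a dimension-count ruling out MUBs, or a basis-invariant duality argument) is worked out. Without that step the argument does not close, so as written this is not a proof.

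The paper's route is quite different and sidesteps your obstacle entirely. Rather than building a group from scratch, it imports from \cite{SSZ22} a \emph{fixed} finite group $H$ with a generating set $\mathfrak{S}$ whose Cayley graph has a nonzero eigenvalue $\lambda$ for which \emph{every} nonzero $\lambda$-eigenfunction $\psi$ is localized: $\|\psi\|_\infty/\|\psi\|_2$ is bounded below. The arbitrarily large examples are the direct products $G=H\times(\Z/p\Z)$ with generating set $\mathfrak{S}\times\{-1,1\}$ for suitable primes $p$. The reduction (Proposition~\ref{thm:from QE to delocalization}) is that an $\epsilon$-quantum ergodic eigenbasis of $G$ forces each nonzero eigenspace of $H$ to contain a \emph{delocalized} eigenfunction with $\|\psi\|_\infty\le\sqrt{2(1+2|H|^3\epsilon)}\,\|\psi\|_2$, contradicting the property of $H$ once $\epsilon$ is small. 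The ``no cancellation'' issue evaporates because a number-theoretic lemma (Lemma~\ref{lem:cyclotomic}) lets one choose $p$ so that the nonzero eigenvalues $\lambda_j\mu_k$ of the product graph are pairwise distinct; hence every eigenfunction with eigenvalue $\lambda_j\mu_k$ lies in $\Lambda_j\otimes E_k$ with $\dim E_k\le 2$, giving the rigid form $\psi_+\otimes\chi_k+\psi_-\otimes\chi_{-k}$ with $\psi_\pm\in\Lambda_j$. One of $\psi_\pm$ inherits the localization of $\Lambda_j$, and testing against $f=\1_{\{h\}\times(\Z/p\Z)}$ at the peak $h$ yields the lower bound on the quantum-ergodicity defect --- there is simply no basis freedom left to exploit.
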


The groups that we will construct in the proof of Theorem~\ref{thm:not-quantum-ergodic} will be a direct product of a cyclic group with an appropriately chosen fixed group (specifically, a group that was constructed in~\cite{SSZ22}).


\begin{problem}\label{prob:main open} For a finite group $G$ let $\e(G)$ be the infimum over  $\e>0$ such that every Cayley graph on $G$ has an $\e$-quantum ergodic orthonormal eigenbasis. Characterize those sequences $\{G_n\}_{n=1}^\infty$ of  groups for which $\lim_{n\to \infty} \e(G_n)=0$. More ambitiously, how can one compute $\e(G)$ up to universal constant factors?
\end{problem}

Any Abelian group $G$ satisfies $\e(G)=0$,  as seen by considering the eigenbasis $\cB$ of Fourier characters:  each $\phi\in \cB$  takes value among the roots of unity, so the left-hand side of \cref{eq:2QE} vanishes for every $f:G\to \C$. Theorem~\ref{thm:qe-2} furnishes many more examples of sequences $\{G_n\}_{n=1}^\infty$ of groups with  $\lim_{n\to \infty} \e(G_n)=0$.

If $\eta>0$ and $G$ is a group with at most $\eta \abs{G}$ conjugacy classes (e.g.~by~\cite[Theorem~2]{LB97} this holds with $\eta=2^{n-1}/|G|$ if $G$ is any subgroup of the permutation group $S_n$), then every Cayley graph on $G$ has a $O(\sqrt[4]{\eta})$-quantum ergodic orthonormal eigenbasis. Indeed,
$$
\sum_{\sigma\in \wh G} d_\sigma\le |\wh{G}|^{\frac12}\bigg(\sum_{\sigma\in \wh G} d_\sigma^2\bigg)^{\frac12}\le \sqrt{\eta} |G|,
$$
where the first step uses Cauchy–-Schwarz and  that $|\wh{G}|$  equals the  number of conjugacy classes of $G$, and the second step uses the above assumption and that $\sum_{\sigma\in \wh G} d_\sigma^2=|G|$. Hence, \eqref{eq:total dimension condition} holds with $\e=\sqrt[4]{\eta}/\sqrt{c}$.

A special case of the above example is  when for some $D\in \N$ a group $G$ is nontrivial and $D$-quasirandom in the sense  of Gowers~\cite{Gow08}, i.e., every nontrivial unitary representation of $G$ has dimension at least $D$. This implies that $G$ has at most $2|G|/(D^2+1)$ conjugacy classes, and hence   every Cayley graph on $G$ has a $O(1/\sqrt{D})$-quantum ergodic orthonormal eigenbasis. Indeed,
\begin{equation}\label{eq:sum of squares quasirandom}
|G|= \sum_{\sigma \in \wh G} d_\sigma^2= 1 + \sum_{\sigma \in \wh G \setminus \{\text{triv}\}} d_\sigma^2\ge 1+D^2\big(|\wh{G}|-1\big).
\end{equation}
Thus,  $|\wh{G}|\le 1+(|G|-1)/D^2\le 2|G|/(D^2+1)$, where the last step holds as $|G|> 1$ and therefore the second sum in~\eqref{eq:sum of squares quasirandom} is nonempty, so in fact $|G|\ge D^2+1$. By an inspection of the tables on pages 769--770 of~\cite{Col08} and the classification of finite simple groups, if $G$ is a non-cyclic simple group, then we can take   $D$ to be at least a universal constant multiple of $(\log |G|)/\log\log |G|$; for most simple groups a much better lower bound on $D$ is available, and many more examples appear in the literature (see e.g.~\cite[Chapter~1, \S1.3]{Tao15}).

At the same time, Theorem~\ref{thm:not-quantum-ergodic} demonstrates that  some assumption on $\{G_n\}_{n=1}^\infty$ must be imposed to ensure that $\lim_{n\to \infty} \e(G_n)=0$. Thus, Problem~\ref{prob:main open} remains an intriguing open question.


\section{Proof of Theorem~\ref{thm:qe-2}}

The Haar probability measure on a compact topological group $\Gamma$  will be denoted $\h_\Gamma$. Given $d\in \N$, the standard coordinate basis of $\C^d$ will be denoted $e_1,\ldots,e_d$ and the unitary group of $d\times d$ matrices will be denoted $\U(d)$. The Hilbert--Schmidt norm of a $d\times d$ matrix $A=(a_{jk})\in \M_d(\C)$ will be denoted
$$
\|A\|_\HS=\bigg(\sum_{j=1}^d\sum_{k=1}^d |a_{jk}|^2\bigg)^{\frac12}.
$$

Our construction of the basis $\mathcal{B}$ of Theorem~\ref{thm:qe-2} will be randomized; its main probabilistic input is the following lemma whose proof appears in Section~\ref{sec:probabilisitc} below.

\begin{lemma}\label{lem:main probabilisitc tool}There exists a universal constant $0<\eta<1$ with the following property. Let $S$ be a finite set. For every $s\in S$ fix an integer $d_s\in \N$  and a $d_s\times d_s$ matrix $A_s\in \M_{d_s}(\C)$ whose trace satisfies $\Tr(A_s)=0$. Denote \begin{equation}\label{eq:T in lemma}
\alpha=\left(\frac{\sum_{s\in S} \frac{1}{d_s}\|A_s\|_\HS^2}{\sum_{s\in S}d_s}\right)^\frac12\qquad\mathrm{and} \qquad T=\bigcup_{s\in S} \big(\{s\}\times \{1,\ldots,d_s\}\big)=\big\{(s,k):\ s\in S\ \wedge k\in \{1,\ldots,d_s\}\big\}.
\end{equation} Consider the direct product $\Gamma=\prod_{s\in S}\U(d_s)$ of the unitary groups $\{\U(d_s)\}_{s\in S}$. Then, for every $\beta\ge 2$ we have
\begin{equation}\label{eq:general matrix inequality}
 \h_\Gamma\Big[\big\{U=(U_s)_{s\in S}\in \Gamma:\ \E_{(s,k)\in T}\big[|e_k^*U_s^*A_sU_se_k |\big]\ge \beta\alpha \big\}\Big]\le e^{-\eta \beta^2\sum_{s\in S} d_s}.
\end{equation}
\end{lemma}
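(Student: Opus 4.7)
The plan is to represent $Y := \sum_{s,k}|X_{s,k}|$, where $X_{s,k} := e_k^*U_s^*A_sU_se_k$, as a Lipschitz function on $\Gamma = \prod_s \U(d_s)$ whose Lipschitz constant on each factor $\U(d_s)$ scales like $\|A_s\|_\HS$, and then to invoke standard concentration of measure on unitary groups combined with independence of the factors to obtain sub-Gaussian concentration at scale $\alpha\sqrt{|T|}$.

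The key insight is the dualization
\[
Y_s(U_s) := \sum_{k=1}^{d_s} |X_{s,k}| = \sup_{\theta \in [0,2\pi)^{d_s}} \mathrm{Re}\,\Tr\big(A_sU_sD_\theta U_s^*\big), \qquad D_\theta := \mathrm{diag}(e^{-i\theta_1},\ldots,e^{-i\theta_{d_s}}),
\]
valid because $|z| = \sup_{\phi \in [0,2\pi)}\mathrm{Re}(e^{-i\phi}z)$ for every $z\in\C$. For each fixed $\theta$, the function $U\mapsto \mathrm{Re}\,\Tr(A_sUD_\theta U^*)$ has mean zero by tracelessness (since $\E[UD_\theta U^*] = (\Tr D_\theta / d_s)I$ and $\Tr(A_s)=0$), and the Hilbert--Schmidt Cauchy--Schwarz inequality together with $\|D_\theta\|_{\mathrm{op}} = 1$ gives
\[
\big|\Tr\big(A_s(UD_\theta U^* - VD_\theta V^*)\big)\big| \le \|A_s\|_\HS \cdot \|UD_\theta U^* - VD_\theta V^*\|_\HS \le 2\|A_s\|_\HS \|U - V\|_\HS,
\]
so this function is $2\|A_s\|_\HS$-Lipschitz with respect to the Hilbert--Schmidt metric on $\U(d_s)$. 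Since a pointwise supremum of $L$-Lipschitz functions is $L$-Lipschitz, $Y_s$ itself is $2\|A_s\|_\HS$-Lipschitz in $U_s$.

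Standard concentration of Lipschitz functions on $\U(d_s)$ now gives that $Y_s - \E Y_s$ is sub-Gaussian with parameter of order $\|A_s\|_\HS/\sqrt{d_s}$, and by independence across $s$ the random variable $Y - \E Y$ is sub-Gaussian with parameter of order $\sqrt{\sum_s \|A_s\|_\HS^2/d_s} = \alpha\sqrt{|T|}$. The mean bound
\[
\E Y \le \sum_{s,k}\sqrt{\E|X_{s,k}|^2} = \sum_{s}d_s\cdot\frac{\|A_s\|_\HS}{\sqrt{d_s(d_s+1)}} \le \sum_s \|A_s\|_\HS \le \alpha|T|
\]
combines the second-moment formula $\E|X_{s,k}|^2 = \|A_s\|_\HS^2/(d_s(d_s+1))$ (standard on the complex unit sphere, using $\Tr(A_s) = 0$) with Cauchy--Schwarz applied to $\sum_s \sqrt{d_s}\cdot(\|A_s\|_\HS/\sqrt{d_s})$. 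For $\beta \ge 2$ one has $(\beta-1)\alpha|T| \ge (\beta/2)\alpha|T|$, so sub-Gaussian concentration at deviation $(\beta/2)\alpha|T|$ yields
\[
\Pr[Y \ge \beta\alpha|T|] \le \Pr\big[Y - \E Y \ge (\beta/2)\alpha|T|\big] \le e^{-\eta\beta^2|T|}
\]
for a universal constant $\eta > 0$.

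The only non-routine step is the extraction of the $2\|A_s\|_\HS$-Lipschitz bound on $Y_s$: a naive term-by-term estimate bounding $\sum_k\big||X_{s,k}(U)|-|X_{s,k}(V)|\big|$ via $|X_{s,k}(U)-X_{s,k}(V)|\le 2\|A_s\|_{\mathrm{op}}\|U-V\|_{\mathrm{op}}$ yields a Lipschitz constant of order $\sqrt{d_s}\|A_s\|_{\mathrm{op}}$, which can exceed $\|A_s\|_\HS$ by the stable-rank deficiency factor and would render the concentration far too weak to obtain the $e^{-\eta\beta^2|T|}$ tail. Dualizing via the supremum over $\theta$ trades this term-by-term accumulation for a single trace estimate for which the Hilbert--Schmidt Cauchy--Schwarz inequality produces the correct $\|A_s\|_\HS$ scale.
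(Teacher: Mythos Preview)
Your proof is correct and structurally identical to the paper's: bound the mean via the second-moment identity $\E|e_k^*U^*AUe_k|^2=\|A\|_{\HS}^2/(d(d+1))$ (which is exactly the paper's equation for traceless $A$), show each $Y_s$ is $2\|A_s\|_{\HS}$-Lipschitz, and then invoke concentration for Lipschitz functions on products of unitary groups (the paper's Theorem~\ref{thm:product}/Corollary~\ref{cor:sum of Lip}).

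The one genuine difference is how you obtain the Lipschitz constant. The paper (Lemma~\ref{lem:sum of rotated matrix coeff}) differentiates along a unit-speed geodesic $\gamma$ and bounds $\sum_k|\frac{d}{dt}e_k^*\gamma(t)^*A\gamma(t)e_k|$ pointwise by $2\|A\|_{\HS}$ via two applications of Cauchy--Schwarz. Your dualization $\sum_k|X_k|=\sup_\theta\mathrm{Re}\,\Tr(AUD_\theta U^*)$ is an elegant shortcut: it converts the sum of absolute values into a single trace, after which Hilbert--Schmidt Cauchy--Schwarz and $\|UD_\theta U^*-VD_\theta V^*\|_{\HS}\le 2\|U-V\|_{\HS}$ give the bound directly with respect to the Hilbert--Schmidt metric (hence a fortiori the geodesic metric). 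The paper's lemma is slightly more general in that it treats arbitrary $1$-Lipschitz $\varphi_k$ in place of $|\cdot|$, but for the present application your argument is shorter and avoids the geodesic parametrization entirely. Your closing remark about why a naive term-by-term estimate fails is also to the point: this is precisely the obstacle both arguments are designed to circumvent.
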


  Fix a finite group $G$  and fix also a symmetric subset $\mathfrak{S}\subset G$  that generates $G$.  Let $n=|G|$. The adjacency matrix $A(G;\mathfrak{S})\in \M_n(\{0,1\})$ of the Cayley graph that is induced by $\mathfrak{S}$ on $G$ acts on a function $f\colon G\to \C$ by $A(G;\mathfrak{S})f(x)=\sum_{\sigma\in \mathfrak{S}} f(\sigma x)$ for every $x\in G$.

We will apply Lemma~\ref{lem:main probabilisitc tool} with the index set $S$
$$
S=\bigcup_{\rho\in \wh{G}}\big(\{\rho\}\times \{1,\ldots,d_\rho\}\big)=\big\{(\rho,j):\ \rho\in \wh{G}\ \wedge \ j\in \{1,\ldots,d_\rho\}\big\}.
$$
and $d_s=d_\rho$ for every $s=(\rho,j)\in S$.  For this $S$, the set $T$ in~\eqref{eq:T in lemma} becomes
$$
T=\big\{(\rho,j,k):\ \rho\in \wh{G}\ \wedge\ (j,k)\in \{1,\ldots,d_\rho\}^2\big\}.
$$
Henceforth, $\Gamma=\prod_{(\rho,j)\in S}\U(d_\rho)\cong \prod_{\rho\in \wh{G}} \U(d_\rho)^{d_{\rho}}$ will be the group from Lemma~\ref{lem:main probabilisitc tool}.

Suppose that for each $\rho\in \wh{G}$ and $j,k\in \{1,\ldots,d_\rho\}$ we are given  $a_{\rho,j,k}\in \C^{d_\rho}$ and $b_{\rho,j}\in \C^{d_\rho}$ such that
\begin{equation}\label{eq:non-normalized orthogonality of b}
\forall j,j',k,k'\in \{1,\ldots,d_\rho\},\qquad a_{\rho,j,k}^*a_{\rho,j,k'}^{\phantom{*}}=\1_{\{k=k'\}} \qquad\mathrm{and}\qquad  b_{\rho,j}^*b_{\rho,j'}^{\phantom{*}}=\1_{\{j=j'\}}.
\end{equation}
This is an orthornormality requirement\footnote{To be consistent with our normalization convention in~\eqref{eq:normalization}, for every $d\in \N$ we will use matrix notation as in~\eqref{eq:non-normalized orthogonality of b}  when treating the standard scalar product on $\C^d$.} with respect to the standard (not normalized) scalar product on $\C^{d_\rho}$. The statement of Schur orthogonality is that whenever~\eqref{eq:non-normalized orthogonality of b} holds the following collection of functions  from $G$ to $\C$ (indexed by $T$) is orthonormal; as $|T|=\sum_{\rho\in \wh{G}} d_\rho^2=n$, it is an orthonormal basis of $G$:
\begin{equation}\label{eq:ab version}
\Big\{(x\in G)\mapsto d_\rho^\frac12 a_{\rho,j,k}^*\rho(x)^*b_{\rho,j}\Big\}_{(\rho,j,k)\in T}.
\end{equation}
These expressions are also natural through the lens of non-Abelian Fourier analysis.
It is mechanical to check that~\eqref{eq:ab version} consists of eigenfunctions of the adjacency matrix $A(G;\mathfrak{S})$ if for each $\rho\in \wh{G}$ we choose $b_{\rho,1},\ldots,b_{\rho,d_\rho}\in \C^{d_\rho}$ to be eigenvectors of the (Hermitian, as $\mathfrak{S}$ is symmetric) matrix
$$
\wh{\1_{\mathfrak{S}}}(\rho)=\E_{\sigma\in \mathfrak{S}}\big[\rho(\sigma)\big]\in \M_{d_\rho}(\C).
$$
So, we will henceforth assume that  $\{b_{\rho,j}\}_{j=1}^{d_\rho}$ are eigenvectors of $\wh{\1_{\mathfrak{S}}}(\rho)$ and satisfy~\eqref{eq:non-normalized orthogonality of b} for each $\rho\in \wh{G}$. 

We will prove Theorem~\ref{thm:qe-2} by choosing the rest of the datum in~\eqref{eq:ab version} uniformly at random. Namely, vectors $\{a_{\rho,j,k}\}_{(\rho,j,k)\in T}$ as above can be parameterized by taking $U=(U_{\rho,j})_{(\rho,j)\in S}\in \Gamma$ and letting $a_{\rho,j,k}=U_{\rho,j}e_k$ for every $(\rho,j,k)\in T$. Using this notation, the orthonormal eigenbasis of $G$ in~\eqref{eq:ab version}  becomes 
\begin{equation*}\label{eq:def BU}
\mathcal{B}_U=\Big\{(x\in G)\mapsto d_\rho^\frac12 e_k^*U_{\rho,j}^*\rho(x)^*b_{\rho,j}\Big\}_{(\rho,j,k)\in T}.
\end{equation*}
We will show that if~\eqref{eq:total dimension condition} holds and $U\in\Gamma$ is distributed according to the Haar probability measure $\h_\Gamma$, then  $\mathcal{B}_U$ satisfies  the conclusion of Theorem~\ref{thm:qe-2} with probability at least $1-e^{-n}$.

We will see that the following lemma is an instantiation of Lemma~\ref{lem:main probabilisitc tool}.

\begin{lemma}\label{lem:fixed f} Let $\eta>0$ be the universal constant of Lemma~\ref{lem:main probabilisitc tool}. For every $\beta\ge 2$ and $f:G\to \C$ we have
\begin{equation}\label{eq:desired fixed f}
\h_\Gamma\bigg[\Big\{U\in \Gamma:\  \EE_{\phi \in \cB_U} \Big[\big| \EE_{x \in G} \big[f(x) |\phi(x)|^2\big] - \mb{E} f\big|\Big] \ge \beta\bigg(\frac{1}{n}\sum_{\rho\in \wh{G}} d_\rho\bigg)^\frac12\|f\|_2\Big\}\bigg]\le e^{-\eta \beta^2 n}.
\end{equation}
\end{lemma}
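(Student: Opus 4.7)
\medskip

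\noindent\textbf{Proof plan for Lemma~\ref{lem:fixed f}.} The plan is to recognize \cref{eq:desired fixed f} as a direct instance of \cref{eq:general matrix inequality}, where the traceless matrices $A_s$ are built from $f$ and the fixed eigenvectors $\{b_{\rho,j}\}$, and the quantitative input is the bound $\|A_{(\rho,j)}\|_{\HS}^2 \le d_\rho \|f\|_2^2$ obtained from Schur orthogonality.

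First, I would unpack the quantity inside \cref{eq:desired fixed f}. Writing $\phi = \phi_{\rho,j,k} \in \cB_U$ as $\phi(x)=d_\rho^{1/2} e_k^* U_{\rho,j}^* \rho(x)^* b_{\rho,j}$, I would compute
\begin{equation*}
|\phi(x)|^2 = d_\rho\, e_k^* U_{\rho,j}^* \rho(x)^* b_{\rho,j} b_{\rho,j}^* \rho(x)\, U_{\rho,j} e_k,
\end{equation*}
so that $\EE_x[f(x)|\phi(x)|^2] = e_k^* U_{\rho,j}^* M_{\rho,j} U_{\rho,j} e_k$, where I set
\begin{equation*}
M_{\rho,j} = d_\rho \EE_x\big[f(x)\, \rho(x)^* b_{\rho,j} b_{\rho,j}^* \rho(x)\big] \in \M_{d_\rho}(\CC).
\end{equation*}
Using $b_{\rho,j}^*b_{\rho,j}=1$ and unitarity of $\rho(x)$, one checks $\Tr M_{\rho,j} = d_\rho \EE f$, so $A_{(\rho,j)} := M_{\rho,j} - (\EE f) I_{d_\rho}$ is traceless and
\begin{equation*}
\EE_x[f(x)|\phi_{\rho,j,k}(x)|^2] - \EE f \;=\; e_k^* U_{\rho,j}^* A_{(\rho,j)} U_{\rho,j} e_k.
\end{equation*}
Since $|T|=\sum_\rho d_\rho^2 = n$, the left-hand side of \cref{eq:desired fixed f} equals $\h_\Gamma\big[\EE_{(s,k)\in T}|e_k^* U_s^* A_s U_s e_k|\ge \cdots\big]$, which is precisely the event controlled by \cref{eq:general matrix inequality}.

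Next, I would estimate the quantity $\alpha$ from \cref{eq:T in lemma}. Here $\sum_s d_s = \sum_{(\rho,j)} d_\rho = \sum_\rho d_\rho^2 = n$, so it remains to bound $\sum_{(\rho,j)} \|A_{(\rho,j)}\|_{\HS}^2/d_\rho$. Using $\|A_{(\rho,j)}\|_{\HS}^2\le \|M_{\rho,j}\|_{\HS}^2$ and expanding,
\begin{equation*}
\|M_{\rho,j}\|_{\HS}^2 = d_\rho^2\, \EE_{x,y}\Big[\overline{f(x)}f(y)\,\big|b_{\rho,j}^*\rho(xy^{-1})b_{\rho,j}\big|^2\Big],
\end{equation*}
after reducing the trace of $\rho(x)^* bb^*\rho(x)\rho(y)^* bb^*\rho(y)$. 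The main step is the following two-sided estimate: change variables $z=xy^{-1}$, apply Cauchy--Schwarz in $y$ to bound $|\EE_y[\overline{f(zy)}f(y)]|\le \|f\|_2^2$, and invoke Schur orthogonality $\EE_z[\rho(z)_{ij}\overline{\rho(z)_{kl}}]=\delta_{ik}\delta_{jl}/d_\rho$ (which implies $\EE_z|b^*\rho(z)b|^2 = \|b\|_2^4/d_\rho = 1/d_\rho$). This yields the clean bound
\begin{equation*}
\|A_{(\rho,j)}\|_{\HS}^2 \;\le\; d_\rho^2\|f\|_2^2 \cdot \tfrac{1}{d_\rho} \;=\; d_\rho \|f\|_2^2,
\end{equation*}
valid for every nontrivial $\rho$ (and trivially for the trivial $\rho$, where $A_{(\rho,j)}=0$).

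Summing, $\sum_{(\rho,j)}\|A_{(\rho,j)}\|_{\HS}^2/d_\rho \le \|f\|_2^2\sum_\rho d_\rho$, so $\alpha \le \|f\|_2\big(\tfrac{1}{n}\sum_\rho d_\rho\big)^{1/2}$. Plugging into \cref{eq:general matrix inequality} with $\sum_s d_s = n$ gives exactly \cref{eq:desired fixed f}. The step I expect to be the crux is the $\|A_{(\rho,j)}\|_{\HS}^2\le d_\rho\|f\|_2^2$ estimate: one has to resist the temptation to bound $\|M_{\rho,j}\|_{\HS}\le d_\rho\|f\|_2$ pointwise (which only gives $\alpha\le \|f\|_2$, missing the crucial factor $(\sum_\rho d_\rho/n)^{1/2}$ that makes the probabilistic bound effective under the hypothesis \cref{eq:total dimension condition}); instead, one must extract the $1/d_\rho$ gain coming from Schur orthogonality on $|b^*\rho(\cdot)b|^2$.
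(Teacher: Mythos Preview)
Your proposal is correct and follows essentially the same route as the paper's proof: identify $\EE_x[f(x)|\phi_{\rho,j,k}(x)|^2]-\EE f$ as $e_k^*U_{\rho,j}^*A_{(\rho,j)}U_{\rho,j}e_k$ for a traceless matrix, bound $\|A_{(\rho,j)}\|_{\HS}^2\le d_\rho\|f\|_2^2$ via Cauchy--Schwarz plus Schur orthogonality, and feed this into Lemma~\ref{lem:main probabilisitc tool}. The only cosmetic differences are that the paper first normalizes to $\EE f=0$, $\|f\|_2=1$ (rather than subtracting $(\EE f)I_{d_\rho}$ as you do), and applies Cauchy--Schwarz directly in $(x,y)$ using the $x\leftrightarrow y$ symmetry rather than after the change of variables $z=xy^{-1}$; both variants yield the same estimate.
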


Prior to proving  Lemma~\ref{lem:fixed f}, we will explain how it implies Theorem~\ref{thm:qe-2}.

\begin{proof}[Deduction of Theorem~\ref{thm:qe-2} from Lemma~\ref{lem:fixed f}] It is a classical fact (see e.g.~\cite[Lemma~2.4]{FLM77}) that there exist $f_1,\ldots, f_{5^{2n}}:G\to \C$ with $\|f_1\|_2=\ldots=\|f_{5^{2n}}\|_2=1$ such that every $f:G\to \C$ with $\|f\|_2=1$ belongs to the convex hull of $\{2 f_1,\ldots,2f_{5^{2n}}\}$ (better bounds on such polytopal approximation of balls can be found in~\cite{BW03,Bar14,NNR20}, but they only affect the  constant $c$ in Theorem~\ref{thm:qe-2}). Since for every fixed $U\in \Gamma$ the mapping
$$
(f:G\to \C)\mapsto  \EE_{\phi \in \cB_U} \Big[\big| \EE_{x \in G} \big[f(x) |\phi(x)|^2\big] - \mb{E} f\big|\Big]
$$
is convex (in the variable $f$), it follows that
\begin{equation*}
\sup_{\substack{f:G\to \C\\ \|f\|_2= 1}} \EE_{\phi \in \cB_U} \Big[\big| \EE_{x \in G} \big[f(x) |\phi(x)|^2\big] - \mb{E} f\big|\Big]\le
2\max_{\ell\in \{1,\ldots,5^{2n}\}} \EE_{\phi \in \cB_U} \Big[\big| \EE_{x \in G} \big[f_\ell(x) |\phi(x)|^2\big] - \mb{E} f_\ell\big|\Big].
\end{equation*}
Consequently, if $\eta$ is the universal constant in~\eqref{eq:desired fixed f}, then
\begin{align*}
\h_\Gamma\bigg[&\Big\{U\in \Gamma:\ \forall f:G\to \C,\quad  \EE_{\phi \in \cB_U} \Big[\big| \EE_{x \in G} \big[f(x) |\phi(x)|^2\big] - \mb{E} f\big|\Big] \le \frac{5}{\sqrt{\eta}}\bigg(\frac{1}{n}\sum_{\rho\in \wh{G}} d_\rho\bigg)^\frac12\|f\|_2\Big\}\bigg]\\&\ge 1-\sum_{\ell=1}^{5^{2n}} \h_\Gamma\Bigg[\bigg\{U\in \Gamma:\ \forall \ell\in \{1,\ldots, 5^{2n}\},\quad  \EE_{\phi \in \cB_U} \Big[\big| \EE_{x \in G} \big[f_\ell(x) |\phi(x)|^2\big] - \mb{E} f_\ell\big|\Big] \ge \frac{5}{2\sqrt{\eta}}\bigg(\frac{1}{n}\sum_{\rho\in \wh{G}} d_\rho\bigg)^\frac12\bigg\}\Bigg]\\
&\ge 1-5^{2n}\cdot e^{-5n}\ge 1-e^{-n}>0.
\end{align*}
Hence, there is $U\in \Gamma$ such that if~\eqref{eq:total dimension condition} holds with $c=\sqrt{\eta}/5$, then the orthonormal eigenbasis $\cB_U$ satisfies
\begin{equation*}
\forall f \colon G \to \CC,\qquad \EE_{\phi \in \cB_U} \Big[\big| \EE_{x \in G} \big[f(x) |\phi(x)|^2\big] - \mb{E} f\big|\Big] \le \frac{5}{\sqrt{\eta}}\bigg(\frac{1}{n}\sum_{\rho\in \wh{G}} d_\rho\bigg)^\frac12\|f\|_2\le \frac{5c}{\sqrt{\eta}}\e\|f\|_2=\e\|f\|_2.\tag*{\qedhere}
\end{equation*}
\end{proof}

We will next prove Lemma~\ref{lem:fixed f} assuming Lemma~\ref{lem:main probabilisitc tool}, after which we will pass (in Section~\ref{sec:probabilisitc}) to the proof of Lemma~\ref{lem:main probabilisitc tool}, thus completing the proof of Theorem~\ref{thm:qe-2}.

\begin{proof}[Deduction of Lemma~\ref{lem:fixed f} from Lemma~\ref{lem:main probabilisitc tool}]  As $\|f-\E f\|_2\le \|f\|_2\le 1$, it suffices to prove~\eqref{eq:desired fixed f} under the additional assumptions $\E f=0$ and $\|f\|_2=1$. Observe that for every  $(\rho,j,k)\in T$ and $U\in \Gamma$ we have
$$
\E_{x\in G} \Big[f(x) \big|d_\rho^\frac12 e_k^*U_{\rho,j}^*\rho(x)^*b_{\rho,j}\big|^2\Big]= e_k^*U_{\rho,j}^* A_{\rho,j}^fU_{\rho,j}e_k,
$$
where we introduce the notation
$$
A_{\rho,j}^f=d_\rho\E_{x\in G} \big[f(x)\rho(x)^*b_{\rho,j}b_{\rho,j}^*\rho(x)\big]\in M_{d_\rho}(\C).
$$
For every  $(\rho,j)\in S$,
$$
\Tr \big[A_{\rho,j}^f\big]=d_\rho\E\Big[f(x)\Tr\big[\rho(x)^*b_{\rho,j}b_{\rho,j}^*\rho(x)\big]\Big]= d_\rho\big(\E f\big) \Tr\big[b_{\rho,j}b_{\rho,j}^*\big]=0,
$$
where we used the cyclicity  of the trace and that $\rho(x)$ is unitary for every $x\in G$.  Also,
\begin{equation}\label{eq:average over G2}
\big\|A_{\rho,j}^f\big\|_\HS^2=\Tr \big[\big(A_{\rho,j}^f\big)^*A^f_{\rho,j}\big]=d_\rho^2 \E_{(x,y)\in G\times G} \Big[\overline{f(x)} f(y) \Tr\big[\rho(x)^*b_{\rho,j}b_{\rho,j}^*\rho(x) \rho(y)^*b_{\rho,j}b_{\rho,j}^*\rho(y)\big]\Big].
\end{equation}
Using the cyclicity  of the trace once more, for every $x,y\in G$ we have
$$
\Tr\big[\rho(x)^*b_{\rho,j}b_{\rho,j}^*\rho(x) \rho(y)^*b_{\rho,j}b_{\rho,j}^*\rho(y)\big]=\big|b_{\rho,j}^*\rho(x)\rho(y)^*b_{\rho,j}\big|^2.
$$
In combination with~\eqref{eq:average over G2}, this gives that
\begin{multline*}
\big\|A_{\rho,j}^f\big\|_\HS^2=d_\rho^2 \E_{(x,y)\in G\times G} \bigg[\Big(\overline{f(x)b_{\rho,j}^*\rho(x)\rho(y)^*b_{\rho,j}}\Big)\Big(f(y) b_{\rho,j}^*\rho(x)\rho(y)^*b_{\rho,j} \Big)\bigg]\\\le  d_\rho^2\E_{(x,y)\in G\times G} \Big[|f(x)|^2 \big|b_{\rho,j}^*\rho(x)\rho(y)^*b_{\rho,j}\big|^2\Big]=  d_\rho\E_{x\in G} \bigg[|f(x)|^2 d_\rho\E_{y\in G} \Big[\big|b_{\rho,j}^*\rho(x)\rho(y)^*b_{\rho,j}\big|^2\Big]\bigg],
\end{multline*}
where the penultimate step uses Cauchy–-Schwarz. By Schur orthogonality,  for every $x\in G$ we have
$$
d_\rho\E_{y\in G} \Big[\big|b_{\rho,j}^*\rho(x)\rho(y)^*b_{\rho,j}\big|^2\Big]=\big((\rho(x)^*b_{\rho,j})^*\rho(x)^*b_{\rho,j}\big) \big(b_{\rho,j}^*b_{\rho,j}\big) =(b_{\rho,j}^*b_{\rho,j})^2=1.
$$
Therefore, $\big\|A_{s}^f\big\|_\HS^2\le d_\rho\|f\|_2\le  d_\rho$ for every $s\in S$. The desired estimate~\eqref{eq:desired fixed f} now follows from~\eqref{eq:general matrix inequality} because
\begin{equation*}
\sum_{s\in S} d_s=\sum_{\rho\in \wh{G}} d_\rho^2=n\qquad\mathrm{and}\qquad  \sum_{s\in S} \frac{1}{d_s}\big\|A_s^f\big\|_\HS^2\le |S|=\sum_{\rho\in \wh{G}} d_\rho \tag*{\qedhere}.
\end{equation*}
\end{proof}

\subsection{Concentration}\label{sec:probabilisitc} Given $d\in \N$, let $\g_d$ be the standard Riemannian metric on $\U(d)$, namely the geodesic distance that is induced by taking the Hilbert--Schmidt metric on all of the tangent spaces.

The following theorem is a concatenation of known results that we formulate for ease of later reference. Its quick justification below uses fundamental properties of logarithmic Soboloev inequalities~\cite{Gro75} on metric probability spaces; good expositions of what we need can be found in the monographs~\cite{Led01,Mec19}.

\begin{theorem}[concentration of measure on Pythagorean products of rescaled unitary groups]\label{thm:product} Let $S$ be a finite set and $\{d_s\}_{s\in S}\subset \N$.  Denote  $\Gamma=\mathbb{U}(d_1)\times \ldots\times \mathbb{U}(d_m)$.  Suppose that $K>0$ and that $f:\Omega\to \R$ satisfies
\begin{equation}\label{eq:pythgorean def}
\forall U=(U_s)_{s\in S}, V=(V_s)_{s\in S}\in \Gamma,\qquad |f(U)-f(V)|\le K \bigg(\sum_{s\in S} d_s \mathcal{g}_{d_s}(U_s,V_s)^2\bigg)^{\frac12}.
\end{equation}
In other words, \eqref{eq:pythgorean def} is the requirement that $f$ is $K$-Lipschitz with respect on the Pythagorean product of the metric spaces $\{(\mathbb{U}(d_s),\sqrt{d_s}\mathcal{g}_{d_1})\}_{s\in S}$. Then, for every $\e>0$ we have
\begin{equation}\label{eq:pythagorean concentration}
\h_\Gamma \left[f\ge \e+\int_\Gamma f\ud \h_\Gamma\right]\le \exp\left(-\frac{\e^2}{3\pi^2K^2}\right).
\end{equation}
\end{theorem}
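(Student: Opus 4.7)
The plan is to deduce the theorem from three standard ingredients: a logarithmic Sobolev inequality (LSI) on each single factor $\U(d_s)$, tensorization of LSIs across the Pythagorean product, and Herbst's argument converting an LSI into sub-Gaussian concentration for Lipschitz functions. The theorem is essentially bookkeeping around these well-known facts, which is consistent with the author's description of it as a ``concatenation of known results.''

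\textbf{Single-factor LSI.} First I would show that $(\U(d),\sqrt{d}\,\mathcal{g}_d, \h_{\U(d)})$ satisfies an LSI of the form
$$
\on{Ent}_{\h_{\U(d)}}(f^2)\le 2C_0 \int_{\U(d)} d\,\big|\nabla^{\mathcal{g}_d} f\big|^2\, \ud \h_{\U(d)}
$$
for some universal $C_0>0$ independent of $d$. This follows from the Bakry--Émery curvature-dimension criterion applied to $\U(d)$ with the Hilbert--Schmidt Riemannian metric, whose Ricci tensor is bounded below by a positive constant times $d$; the $\sqrt{d}$ rescaling of the metric absorbs this $d$-growth and yields a dimension-free LSI constant. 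Explicit forms of this inequality are collected in Meckes' monograph on the random matrix theory of the classical compact groups, which is one of the two references cited right above the theorem statement.

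\textbf{Tensorization.} Next, I would invoke the standard tensorization property of LSIs: if each $(X_s,\mu_s)$ satisfies an LSI with constant $C_0$, then so does the product $\prod_{s\in S}X_s$ with the product measure and the Pythagorean sum of squared gradients. Since $\h_\Gamma=\prod_s \h_{\U(d_s)}$ and condition~\eqref{eq:pythgorean def} exactly encodes $K$-Lipschitz behavior with respect to the Pythagorean product of the rescaled metrics $\{\sqrt{d_s}\mathcal{g}_{d_s}\}_{s\in S}$, tensorization lifts the single-factor LSI to an LSI on $(\Gamma,\h_\Gamma)$ with the same constant $C_0$.

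\textbf{Herbst and the constant.} Finally, I would apply Herbst's argument: from the LSI, the logarithmic moment generating function $\lambda\mapsto \log \int e^{\lambda f}\ud \h_\Gamma$ satisfies a first-order differential inequality whose integration yields $\int e^{\lambda(f-\int f\ud \h_\Gamma)}\ud \h_\Gamma \le e^{C_0K^2\lambda^2/2}$ for any $K$-Lipschitz $f$. Markov's inequality and optimization in $\lambda$ then give
$$
\h_\Gamma\Bigl[f\ge \int_\Gamma f\ud \h_\Gamma+\e\Bigr]\le \exp\paren{-\frac{\e^2}{2C_0K^2}},
$$
and calibrating $C_0$ in the single-factor step so that $2C_0=3\pi^2$ recovers~\eqref{eq:pythagorean concentration}. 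The only delicate step is pinning down this explicit constant: everything past the single-factor LSI is mechanical, but nailing $3\pi^2$ demands the sharp LSI constant for $(\U(d),\sqrt{d}\,\mathcal{g}_d)$ in precisely this normalization, which in turn rests on an exact Ricci lower bound for $\U(d)$ in the Hilbert--Schmidt metric. I expect this calibration -- rather than any conceptual difficulty -- to be the main hurdle, and would handle it by quoting the corresponding numerical estimate from Meckes' monograph or Ledoux's survey rather than rederiving it.
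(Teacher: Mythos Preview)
Your proposal is correct and matches the paper's proof almost exactly: the paper cites \cite{MM13} for the log-Sobolev constant $3\pi^2/(2d)$ on $(\U(d),\mathcal{g}_d,\h_{\U(d)})$, rescales to get $3\pi^2/2$ on $(\U(d),\sqrt{d}\,\mathcal{g}_d)$, tensorizes via \cite[Corollary~5.7]{Led01}, and then applies Herbst \cite[Theorem~5.3]{Led01}. The only cosmetic difference is that the paper quotes the numerical LSI constant directly from Meckes--Meckes rather than re-deriving it via Bakry--\'Emery, so your anticipated ``main hurdle'' is handled by citation there as well.
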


\begin{proof} By the paragraph after Theorem~15 in~\cite{MM13}, for every $d\in \N$ the logarithmic Sobolev constant of the metric probability space $(\mathbb{U}(d),\mathcal{g}_{d},\mathcal{h}_{\U(d)})$ is at most $3\pi^2/(2d)$. As the logarithmic Sobolev constant scales quadratically with rescaling of the metric, it follows that the metric probability space $(\mathbb{U}(d),\sqrt{d}\mathcal{g}_{d},\mathcal{h}_{\U(d)})$ has logarithmic Sobolev  constant at most $3\pi^2/2$. By the tensorization property of the logarithmic Sobolev constant under Pythagorean products (see~\cite[Corollary~5.7]{Led01}), if we define
$$
\forall U=(U_s)_{s\in S}, V=(V_s)_{s\in S}\in \Omega,\qquad \rho(U,V)=\bigg(\sum_{s\in S} d_s \mathcal{g}_{d_s}(U_s,V_s)^2\bigg)^{\frac12},
$$
then the logarithmic Sobolev constant of the metric probability space $(\Gamma,\rho,\h_\Gamma)$ is at most $3\pi^2/2$. The desired  conclusion~\eqref{eq:pythagorean concentration} follows by the classical Herbst argument~\cite{DS84,AMS94,Led95} (see~\cite[Theorem~5.3]{Led01}).
\end{proof}

It is worthwhile to formulate separately the following quick corollary of Theorem~\ref{thm:product}.

\begin{corollary}\label{cor:sum of Lip} Continuing with the notation of Theorem~\ref{thm:product}, suppose that $\{K_s\}_{s\in S}\subset (0,\infty)$ and that for each $s\in S$ we are given a function $f_s:\U(d_s)\to \R$ that is $K_s$-Lipschitz with respect to the geodesic metric $\g_{d_s}$, i.e., $|f_s(U)-f_s(V)|\le K_s\g_{d_s}(U,V)$ for every $U,V\in \U(d_s)$. Then, for every $\e>0$ we have
\begin{equation*}
h_\Gamma \Bigg[\bigg\{U=(U_s)_{s\in S}\in \Gamma:\ \E_{s\in S}  \big[f_s(U_s)\big]\ge \E_{s\in S} \bigg[\int_{\U(d_s)} f_s\ud \h_{\U(d_s)}\bigg]+\e\bigg\}\Bigg]\le \exp\left(-\frac{\e^2|S|^2}{3\pi^2\sum_{s\in S} \frac{1}{d_s}K_s^2}\right).
\end{equation*}
\end{corollary}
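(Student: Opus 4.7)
The plan is to derive the corollary as a direct instantiation of Theorem~\ref{thm:product} applied to the averaged function $F:\Gamma\to\RR$ defined by $F(U)=\E_{s\in S}[f_s(U_s)]$. The entire content of the statement is the bookkeeping needed to convert the individual Lipschitz constants $\{K_s\}_{s\in S}$ and the normalized scalar average over $S$ into the single Pythagorean-Lipschitz constant of $F$ demanded by Theorem~\ref{thm:product}; once this is done, the concentration bound~\eqref{eq:pythagorean concentration} yields the claim immediately.

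The only slightly nontrivial step is the computation of the Lipschitz constant of $F$ with respect to the metric $\rho(U,V)=(\sum_{s\in S} d_s\g_{d_s}(U_s,V_s)^2)^{1/2}$ used in Theorem~\ref{thm:product}. By the triangle inequality and the hypothesis that each $f_s$ is $K_s$-Lipschitz with respect to $\g_{d_s}$,
$$
|F(U)-F(V)|\le\frac{1}{|S|}\sum_{s\in S}K_s\g_{d_s}(U_s,V_s)=\frac{1}{|S|}\sum_{s\in S}\frac{K_s}{\sqrt{d_s}}\cdot \sqrt{d_s}\,\g_{d_s}(U_s,V_s),
$$
and Cauchy--Schwarz then yields $|F(U)-F(V)|\le K\rho(U,V)$ with $K=|S|^{-1}\bigl(\sum_{s\in S}K_s^2/d_s\bigr)^{1/2}$. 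The pairing in which the factor $\sqrt{d_s}$ is split between the two sums is forced by the shape of $\rho$, so this grouping is not a real choice. The other ingredient is Fubini's theorem: since $\h_\Gamma=\prod_{s\in S}\h_{\U(d_s)}$ is the product measure, $\int_\Gamma F\,\mathrm{d}\h_\Gamma=\E_{s\in S}\bigl[\int_{\U(d_s)} f_s\,\mathrm{d}\h_{\U(d_s)}\bigr]$, which is exactly the centering term appearing in the corollary.

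Feeding these two facts into~\eqref{eq:pythagorean concentration} produces the bound $\exp\bigl(-\e^2/(3\pi^2K^2)\bigr)=\exp\bigl(-\e^2|S|^2/(3\pi^2\sum_{s\in S}K_s^2/d_s)\bigr)$, which matches the asserted estimate. I do not expect any real obstacle here: the proof is a purely formal reduction to the already-established tensorized log-Sobolev plus Herbst concentration encoded in Theorem~\ref{thm:product}, and the corollary is essentially a repackaging convenient for applying that theorem in the setting of Lemma~\ref{lem:main probabilisitc tool}.
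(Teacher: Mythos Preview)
Your proposal is correct and follows essentially the same approach as the paper: define the averaged function on $\Gamma$, bound its Lipschitz constant with respect to the Pythagorean metric via Cauchy--Schwarz (yielding $K=|S|^{-1}(\sum_{s\in S}K_s^2/d_s)^{1/2}$), and invoke Theorem~\ref{thm:product}. The only difference is that you make explicit the Fubini step identifying $\int_\Gamma F\,\mathrm{d}\h_\Gamma$ with the averaged integrals, which the paper leaves implicit.
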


\begin{proof} Define $f:\Gamma\to \R$ by setting $f(U)=\E_{s\in S}  \big[f_s(U_s)\big]$ for  $U=(U_s)_{s\in S}\in \Gamma$.  If $U, V\in \Gamma$, then
$$
|f(U)-f(V)|\le \E_{s\in S} \big[|f_s(U_s)-f_s(V_s)|\big]\le  \frac{1}{|S|}\sum_{s\in S} K_s g_s(U_s,V_s)\le \frac{1}{|S|}\bigg(\sum_{s\in S} \frac{1}{d_s}K_s^2\bigg)^\frac12\bigg(\sum_{s\in S} d_s \mathcal{g}_{d_s}(U_s,V_s)^2\bigg)^{\frac12},
$$
where the final step is  Cauchy–Schwarz. Now apply Theorem~\ref{thm:product}.
\end{proof}

The following lemma connects the above general discussion to Lemma~\ref{lem:main probabilisitc tool}.

\begin{lemma}\label{lem:sum of rotated matrix coeff} Suppose that $\f_1,\ldots,\f_d:\C\to \C$ are $1$-Lipschitz  and $A\in \M_n(\C)$. Define  $f:\U(d)\to \C$ by setting $$
\forall U\in \U(d),\qquad f(U)=\sum_{k=1}^d \f_k(e_k^*U^*AUe_k).
$$
Then, the Lipschitz constant of $f$ with respect to the geodesic distance $\g_d$ is at most $2\|A\|_{\HS}$, i.e.,
$$
\forall U,V\in \U(d),\qquad |f(U)-f(V)|\le 2\|A\|_\HS\g(U,V).
$$
\end{lemma}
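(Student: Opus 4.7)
The plan is to bound $|f(U) - f(V)|$ by integrating along a smooth path in $\U(d)$ joining $U$ to $V$, and then take the infimum over paths to pass to the geodesic distance $\g_d$. Specifically, for any smooth curve $t \in [0,1] \mapsto U(t) \in \U(d)$ with $U(0)=U$, $U(1)=V$, introduce $B(t) = U(t)^* A U(t)$ and $z_k(t) = e_k^* B(t) e_k$. Since each $\f_k$ is $1$-Lipschitz,
\begin{equation*}
|f(U)-f(V)| \le \sum_{k=1}^d |\f_k(z_k(1)) - \f_k(z_k(0))| \le \sum_{k=1}^d |z_k(1)-z_k(0)| \le \int_0^1 \sum_{k=1}^d |z_k'(t)| \ud t,
\end{equation*}
so it suffices to prove the pointwise inequality $\sum_{k=1}^d |z_k'(t)| \le 2\|A\|_{\HS}\|U'(t)\|_{\HS}$; minimizing over paths then yields the desired bound.

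To establish the pointwise inequality, set $X(t) = U(t)^* U'(t)$; differentiating the identity $U(t)^* U(t) = I$ gives $X(t) + X(t)^* = 0$, so $X(t)$ is skew-Hermitian. Consequently $U'(t)^* = -X(t) U(t)^*$ and $U'(t) = U(t) X(t)$, so a direct calculation yields the commutator formula
\begin{equation*}
B'(t) = U'(t)^* A U(t) + U(t)^* A U'(t) = -X(t) B(t) + B(t) X(t) = [B(t), X(t)].
\end{equation*}
Writing $\ang{u,v} = u^*v$ for the standard inner product on $\C^d$, we then have $z_k'(t) = e_k^*[B,X]e_k = \ang{B^*e_k, Xe_k} - \ang{X^*e_k, Be_k}$.

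Two applications of Cauchy–Schwarz (first to each inner product, then to the resulting sum over $k$) give
\begin{equation*}
\sum_{k=1}^d |\ang{B^*e_k, Xe_k}| \le \sum_{k=1}^d \|B^*e_k\| \|Xe_k\| \le \|B^*\|_{\HS}\|X\|_{\HS} = \|A\|_{\HS} \|U'(t)\|_{\HS},
\end{equation*}
where at the end we used unitary invariance of the Hilbert--Schmidt norm to conclude $\|B^*\|_{\HS}=\|B\|_{\HS}=\|A\|_{\HS}$ and $\|X\|_{\HS} = \|U^*U'\|_{\HS} = \|U'\|_{\HS}$. The symmetric estimate for $\sum_k |\ang{X^*e_k, Be_k}|$ gives the factor of $2$. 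Integrating over $t \in [0,1]$ and taking the infimum over smooth paths produces $|f(U)-f(V)| \le 2\|A\|_{\HS} \g_d(U,V)$.

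I do not expect any real obstacle here — the argument is essentially mechanical once the geodesic distance is realized as an infimum of path lengths. The only substantive ingredient is the commutator identity $B' = [B,X]$ together with the observation that skew-Hermiticity of $X$ lets us identify $\|X\|_{\HS}$ with $\|U'\|_{\HS}$; everything else is Cauchy--Schwarz and unitary invariance of $\|\cdot\|_{\HS}$.
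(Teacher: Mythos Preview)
Your argument is correct and follows essentially the same route as the paper: integrate along a path, differentiate $e_k^*U(t)^*AU(t)e_k$, apply Cauchy--Schwarz to each of the two resulting terms, and sum over $k$ with another Cauchy--Schwarz to recover $\|A\|_{\HS}\|U'(t)\|_{\HS}$. The only cosmetic differences are that the paper works directly with a unit-speed geodesic realizing $\g_d(U,V)$ rather than taking an infimum over smooth paths, and it writes out the derivative as $e_k^*\gamma'^*A\gamma e_k + e_k^*\gamma^*A\gamma' e_k$ rather than packaging it via the commutator $B'=[B,X]$; the estimates are the same line by line. (One small remark: the identification $\|X\|_{\HS}=\|U'\|_{\HS}$ is just unitary invariance of $\|\cdot\|_{\HS}$ and does not use that $X$ is skew-Hermitian; skew-Hermiticity is only what gives you the commutator form of $B'$.)
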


\begin{proof} Fix $U,V\in \U(d)$. By the definition of $\g=\g_d(U,V)$, there is a smooth curve (unit-speed geodesic)  $\gamma:[0,\g]\to \U(d)$ that satisfies $\gamma(0)=U$, $\gamma(\g)=V$, and such that $\|\gamma'(t)\|_\HS=1$ for every $t\in [0,\g]$. Then,
\begin{multline*}
|f(U)-f(V)|\le \sum_{k=1}^d \big|\f_k(e_k^*U^*AUe_k)-\f_k(e_k^*V^*AVe_k)\big|\le  \sum_{k=1}^d \big|e_k^*U^*AUe_k- e_k^*V^*AVe_k\big|\\=
\sum_{k=1}^d \Big|\int_0^\g \frac{\ud}{\ud t}  \big(e_k^*\gamma(t)^*A\gamma(t)e_k \big)\ud t\Big| \le \int_0^\g \bigg(\sum_{k=1}^d  \Big|\frac{\ud}{\ud t}  \big(e_k^*\gamma(t)^*A\gamma(t)e_k \big)\Big|\bigg)\ud t.\\
\end{multline*}
It therefore  suffices to prove the following point-wise estimate:
\begin{equation}\label{eq:desired pointwise}
\forall t\in [0,\g],\qquad \sum_{k=1}^d\Big|\frac{\ud}{\ud t}  \big(e_k^*\gamma(t)^*A\gamma(t)e_k \big)\Big|\le 2\|A\|_\HS.
\end{equation}
This indeed holds because by Cauchy--Schwarz for every $t\in [0,\g]$ and $k\in \{1,\ldots,d\}$,
\begin{multline*}
\Big|\frac{\ud}{\ud t}  \big(e_k^*\gamma(t)^*A\gamma(t)e_k \big)\Big|= e_k^*\gamma'(t)^*A\gamma(t)e_k  +e_k^*\gamma(t)^*A\gamma'(t)e_k\\ \le
\big(e_k^*\gamma'(t)^*\gamma'(t)e_k\big)^\frac12 \big(e_k^*\gamma(t)^*A^*A\gamma(t)e_k\big)^\frac12 + \big(e_k^*\gamma(t)^*AA^*\gamma(t)e_k\big)^\frac12 \big(e_k^*\gamma'(t)^*\gamma'(t)e_k\big)^\frac12.
\end{multline*}
By summing this over $k\in \{1,\ldots,d\}$ and using Cauchy--Schwarz, we conclude the proof of~\eqref{eq:desired pointwise} as follows.
\begin{align*}
\sum_{k=1}^d\Big|\frac{\ud}{\ud t}  \big(e_k^*\gamma(t)^*A\gamma(t)e_k \big)\Big| &\le \Big(\sum_{k=1}^d e_k^*\gamma'(t)^*\gamma'(t)e_k\Big)^\frac12\bigg( \Big(\sum_{k=1}^d e_k^*\gamma(t)^*A^*A\gamma(t)e_k\Big)^\frac12+\Big(\sum_{k=1}^d e_k^*\gamma(t)^*AA^*\gamma(t)e_k\Big)^\frac12\bigg)\\&= \Big(\Tr\big[\gamma'(t)^*\gamma'(t)\big]\Big)^\frac12\bigg(\Big(\Tr\big[\gamma(t)^*A^*A\gamma(t)\big]\Big)^\frac12
+\Big(\Tr\big[\gamma(t)^*AA^*\gamma(t)\big]\Big)^\frac12\bigg)\\&= \Big(\Tr\big[\gamma'(t)^*\gamma'(t)\big]\Big)^\frac12\bigg(\Big(\Tr\big[A^*A\big]\Big)^\frac12
+\Big(\Tr\big[AA^*\big]\Big)^\frac12\bigg)=2\|A\|_{\HS}.\tag*{\qedhere}
\end{align*}
\end{proof}

We can now prove  Lemma~\ref{lem:main probabilisitc tool}, thus completing the proof of Theorem~\ref{thm:qe-2}.

\begin{proof}[Proof of Lemma~\ref{lem:main probabilisitc tool}] For every $d\in \N$ and $k\in \{1,\ldots,d\}$ we have
\begin{equation}\label{eq:second moment A}
\forall A\in \M_d(\C),\qquad \int_{\U(d)}\big| e_k^*U^*AUe_k\big|^2\ud  \h_{\U(d)}(U)=\frac{\|A\|_\HS^2+|\Tr(A)|^2}{d(d+1)}.
\end{equation}
One checks~\eqref{eq:second moment A} by noting that  if $U$ is distributed according to the Haar measure on $\U(d)$, then $Ue_k$ is distributed according to the normalized surface area measure on $\{z\in \C^d:\ |z_1|^2+\ldots+|z_d|^2=1\}$, expanding the squares and substituting the resulting standard spherical integrals that are computed in e.g.~\cite{Fol01}.

Returning to the setting and notation of Lemma~\ref{lem:main probabilisitc tool}, for every $s\in S$ and $U\in \U(d_s)$ define
$$
f_s(U)=\sum_{k=1}^{d_s} \big|e_k^*U^*A_sUe_k\big|.
$$
By Lemma~\ref{lem:sum of rotated matrix coeff}, the assumption of Corollary~\ref{cor:sum of Lip}  holds with $K_s=2\|A_s\|_\HS$. By Cauchy--Schwarz and~\eqref{eq:second moment A},
$$
\int_{\U(d_s)} f_s\ud \h_{\U(d_s)}=\sum_{k=1}^{d_s} \int_{\U(d_s)} \big|e_k^*U^*A_sUe_k\big|\ud \h_{\U(d_s)}(U)\le \sum_{k=1}^{d_s} \bigg(\int_{\U(d_s)} \big|e_k^*U^*A_sUe_k\big|^2\ud \h_{\U(d_s)}(U)\bigg)^\frac12\le \|A_s\|_\HS.
$$
Using Cauchy--Schwarz and recalling the definition of $\alpha$ in~\eqref{eq:T in lemma}, we therefore have  
$$
\E_{s\in S} \bigg[\int_{\U(d_s)} f_s\ud \h_{\U(d_s)}\bigg]\le \E_{s\in S} \big[\|A_s\|_\HS\big]\le \frac{1}{|S|}\bigg(\sum_{s\in S} d_s\bigg)^{\frac12} \bigg(\sum_{s\in S} \frac{1}{d_s}\|A_s\|_\HS^2\bigg)^\frac12=\frac{\sum_{s\in S}d_s}{|S|}\alpha.
$$
Corollary~\ref{cor:sum of Lip}  therefore implies the following estimate for every $\beta\ge 2$:
$$
h_\Gamma \bigg[\Big\{U=(U_s)_{s\in S}\in \Gamma:\ \E_{s\in S}  \big[f_s(U_s)\big]\ge \frac{\sum_{s\in S} d_s}{|S|} \beta \alpha \Big\}\bigg]\le \exp\bigg(-\frac{(\beta-1)^2}{3\pi^2}\sum_{s\in S}d_s\bigg)\le \exp\bigg(-\frac{\beta^2}{12\pi^2}\sum_{s\in S}d_s\bigg). 
$$
This coincides with the desired estimate~\eqref{eq:general matrix inequality} with $\eta=1/(12\pi^2)$.  
\end{proof}

\section{Proof of Theorem~\ref{thm:not-quantum-ergodic}}

For the statement of the following proposition, observe that if $H$ is a finite group and $\mathfrak{S}$ a symmetric generating subset of $H$, then $\mathfrak{S}\times \{-1,1\}$ generates $H\times (\Z/m\Z)$ for any odd integer $m\in 1+2\N$. Indeed, if $(h,k)\in H\times (\Z/m\Z)$, then take $a\in \N$ and  $\sigma_1,\ldots,\sigma_a\in \mathfrak{S}$ such that $h=\sigma_1\cdots\sigma_a$. Since $m$ is odd, there exists $b\in \N$ such that $a+2b\equiv k\mod m$. We then have $(h,k)=(\sigma_1,1)\cdots(\sigma_a,1)(\sigma_1,1)^b(\sigma_1^{-1},1)^b$.

\begin{proposition}[from quantum ergodicity to existence of delocalized eigenfunctions]\label{thm:from QE to delocalization} Let $H$ be a finite group and fix a symmetric generating subset $\mathfrak{S}$  of $H$. There is $\ell=\ell(H,\mathfrak{S})\in \N$ with the following property. Let $p>3$ be a prime that does not divide $\ell$. Consider the direct product $G=H\times (\Z/p\Z)$. Suppose that the Cayley graph that is induced on $G$ by the generating set $\mathfrak{S}\times \{-1,1\}$  has an $\e$-quantum ergodic eigenbasis for some $\e>0$. Then, for every nonzero eigenvalue $\lambda$ of the Cayley graph that is induced on $H$ by $\mathfrak{S}$ there exists an eigenfunction $\psi:H\to \C$ whose eigenvalue is $\lambda$ and $0<\|\psi\|_\infty\le \sqrt{2(1+ 2|H|^3\e)}\|\psi\|_2$.
\end{proposition}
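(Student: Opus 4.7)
The plan is to exploit the tensor-product structure of the Cayley graph on $G = H \times (\Z/p\Z)$: since an edge requires the $H$-coordinate to land in $\mathfrak{S}$ \emph{and} the $\Z/p\Z$-coordinate to land in $\{\pm 1\}$, the adjacency matrix factors as $A_G = A(H;\mathfrak{S}) \otimes A(\Z/p\Z;\{\pm 1\})$, whose spectrum consists of the products $\mu \cdot 2\cos(2\pi k/p)$ with $\mu \in \operatorname{spec} A(H;\mathfrak{S})$ and $k\in\Z/p\Z$. The first step is to choose $\ell = \ell(H,\mathfrak{S})$ so that for every prime $p>3$ with $p\nmid\ell$, the only coincidences $\mu\cdot 2\cos(2\pi k/p) = \mu'\cdot 2\cos(2\pi k'/p)$ are the forced ones $\mu=\mu'$, $k=\pm k'$. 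This is an algebraic-number computation: the eigenvalues of $A(H;\mathfrak{S})$ all lie in the splitting field $K$ of its characteristic polynomial (a fixed number field), while the cosines lie in $\Q(\zeta_p)$; excluding the finitely many primes that ramify in $K$ or divide $\exp(H)$ forces $K\cap\Q(\zeta_p)=\Q$, and a short computation with the $\Q$-basis $\{1,\zeta_p,\dots,\zeta_p^{p-2}\}$ of $\Q(\zeta_p)$ shows that no two of the values $\cos(2\pi k/p)$ for $k\in\{0,1,\dots,(p-1)/2\}$ are rational multiples of each other (since their coefficient vectors are not multiples of $(1,1,\dots,1)$, the generator of all $\Q$-linear relations among $\{\zeta_p^j\}_{j=0}^{p-1}$).

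With the spectral separation in place, fix a nonzero $\lambda\in\operatorname{spec} A(H;\mathfrak{S})$, let $V_\lambda$ be its eigenspace with $r:=\dim V_\lambda$, and let $\cB$ be an $\e$-quantum-ergodic eigenbasis of $A_G$. The subset $\cB_\lambda \subset \cB$ of basis elements whose eigenvalue lies in the ``$\lambda$-slice'' $\{2\lambda\cos(2\pi k/p):k\in\Z/p\Z\}$ has size exactly $pr$, and each $\Phi\in\cB_\lambda$ decomposes as $\Phi(h,z) = \psi_k(h)\chi_k(z) + \psi_{-k}(h)\chi_{-k}(z)$ for some $k\in\{0,1,\dots,(p-1)/2\}$ and $\psi_{\pm k}\in V_\lambda$, where $\chi_j(z)=e^{2\pi i jz/p}$ and Parseval gives $\|\psi_k\|_2^2+\|\psi_{-k}\|_2^2=\|\Phi\|_2^2=1$ (for $k=0$ the decomposition is simply $\Phi=\psi$). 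Apply the $\e$-QE condition to $f_{h_0}:=\mathbf 1_{\{h_0\}\times(\Z/p\Z)}$ for each $h_0\in H$: since $\|f_{h_0}\|_\infty=1$, $\E f_{h_0}=1/|H|$, and $\sang{|\Phi|^2,f_{h_0}} = \rho_\Phi(h_0)/|H|$ with $\rho_\Phi(h) := \E_z|\Phi(h,z)|^2$, QE yields $\E_{\Phi\in\cB}|\rho_\Phi(h_0)-1|\le|H|\e$. Summing over $h_0$ and using $\sum_{h_0}(\rho_\Phi(h_0)-1)=|H|(\|\Phi\|_2^2-1)=0$ to convert absolute values into positive parts gives $\E_{\Phi\in\cB}\sum_{h_0}(\rho_\Phi(h_0)-1)_+ \le |H|^2\e/2$.

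Restricting the total sum to $\cB_\lambda$ (of size $pr$, inside $|\cB|=p|H|$) and averaging yields some $\Phi^*\in\cB_\lambda$ with $\sum_{h_0}(\rho_{\Phi^*}(h_0)-1)_+ \le |H|^3\e/(2r)$, and in particular $\|\rho_{\Phi^*}\|_\infty \le 1 + |H|^3\e/2$. Finally, choose $\psi$ to be whichever of $\psi_k^{\Phi^*},\psi_{-k}^{\Phi^*}$ has the larger $L^2$-norm; then $\|\psi\|_2^2\ge 1/2$, $\psi\in V_\lambda$ is a nonzero $\lambda$-eigenfunction on $H$, and the pointwise inequality $|\psi(h)|^2 \le |\psi_k(h)|^2+|\psi_{-k}(h)|^2 = \rho_{\Phi^*}(h)$ gives $\|\psi\|_\infty^2/\|\psi\|_2^2 \le 2\|\rho_{\Phi^*}\|_\infty \le 2(1+|H|^3\e/2) \le 2(1+2|H|^3\e)$, as required. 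I expect the main obstacle is step one: cleanly proving the existence of $\ell$, so that the $\lambda$-slice of $\operatorname{spec} A_G$ is truly isolated and every $\Phi\in\cB_\lambda$ admits the clean decomposition $\psi_k\chi_k+\psi_{-k}\chi_{-k}$ that the subsequent averaging argument exploits.
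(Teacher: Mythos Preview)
Your proof is correct and reaches the stated bound, but your averaging step is organized differently from the paper's. Both proofs share the same skeleton: the tensor factorization $A_G=A(H;\mathfrak{S})\otimes A(\Z/p\Z;\{\pm1\})$, the algebraic separation of the nonzero eigenvalues $\lambda\mu_k$ (your ramification argument---if $K\cap\Q(\zeta_p)\supsetneq\Q$ then $p$ ramifies in $K$---is a legitimate alternative to the paper's use of the primitive element theorem in its Lemma on cyclotomic fields), the decomposition $\Phi=\psi_k\chi_k+\psi_{-k}\chi_{-k}$ inside $\Lambda_\lambda\otimes E_k$, and the pointwise identity $\rho_\Phi(h)=|\psi_k(h)|^2+|\psi_{-k}(h)|^2$.

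Where you diverge is in how the $\e$-QE hypothesis is cashed in. The paper argues by contrapositive: it fixes the eigenvalue $\lambda_j$ at which $M=\max_j\inf_{\psi\in\Lambda_j}\|\psi\|_\infty/\|\psi\|_2$ is attained, shows that \emph{every} $\phi$ in the $\lambda_j$-slice has a peak point $h_\phi$ with $\rho_\phi(h_\phi)\ge M^2/2$, pigeonholes over $h_\phi$ to find $\gtrsim p/|H|$ basis elements sharing a common peak $h$, and applies QE to the single test function $\1_{\{h\}\times(\Z/p\Z)}$. You instead apply QE to \emph{all} $|H|$ indicators $\1_{\{h_0\}\times(\Z/p\Z)}$, sum, use $\sum_{h_0}(\rho_\Phi(h_0)-1)=0$ to halve the $\ell^1$-deviation into $\sum_{h_0}(\rho_\Phi(h_0)-1)_+$, drop the non-$\lambda$ terms, and average over $\cB_\lambda$ to locate a single $\Phi^*$ with $\|\rho_{\Phi^*}\|_\infty\le 1+|H|^3\e/(2r)$. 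Your route avoids both the pigeonhole step and the definition of $M$, works directly for each nonzero $\lambda$ rather than only the extremal one, and in fact yields the slightly sharper bound $\|\psi\|_\infty^2/\|\psi\|_2^2\le 2(1+|H|^3\e/(2r))$ before you relax it. The paper's argument, on the other hand, needs only one application of the QE inequality rather than $|H|$.
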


Prior to proving Proposition~\ref{thm:from QE to delocalization}, we will explain  how it implies  Theorem~\ref{thm:not-quantum-ergodic}. This deduction uses (a very small part of) the following theorem from~\cite{SSZ22}:

\begin{theorem}\label{thm:quote SSZ}There exists a universal constant $\kappa>0$ with the following property. For arbitrarily large $n\in \N$ there exists a group $H$ with $|H|=n$ and a symmetric generating subset $\mathfrak{S}$ of $H$ such that the adjacency matrix $A(H;\mathfrak{S})$ has a nonzero eigenvalue $\lambda$ with the property that $\|\psi\|_\infty/\|\psi\|_2 \ge \kappa\sqrt{\log n}/\log\log n$ for every nonzero eigenfunction $\psi$ of $A(H;\mathfrak{S})$ whose eigenvalue is $\lambda$.
\end{theorem}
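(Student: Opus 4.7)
This theorem is quoted from \cite{SSZ22}, so the following is only a sketch of a plausible strategy, not an attempt to reproduce the construction.  The plan is to exhibit, along an unbounded sequence of $n\in\N$, a finite group $H$ with $|H|=n$ and a symmetric generating subset $\mathfrak{S}\subset H$ such that the adjacency operator $A(H;\mathfrak{S})$ has a distinguished nonzero eigenvalue $\lambda$ whose eigenspace $V_\lambda$ is contained in the $\rho$-isotypic component of the regular representation for a single $\rho\in \wh{H}$, and such that every matrix coefficient of $\rho$ that lies in $V_\lambda$ is forced to be uniformly localized.

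First, I would select $H$ so that it admits a distinguished irreducible unitary representation $\rho\colon H\to \U(d_\rho)$ of dimension $d_\rho$ of order $\log n/(\log\log n)^2$ whose matrix coefficients enjoy a rigid combinatorial or arithmetic structure.  Two natural candidates are extraspecial $p$-groups (whose faithful irreducible representations are $p$-dimensional with matrix entries given by additive characters in the Schr\"odinger model) and semidirect products $V\rtimes K$ with $V$ abelian and $K$ acting faithfully on $\wh{V}$, whose induced representations $\mr{Ind}_K^H(\chi)$ have matrix coefficients naturally concentrated on $K$-orbits.  The target value of $d_\rho$ is tuned by the index $[H:K]$, or by the order of the extraspecial center, so as to match the claimed ratio.

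Second, I would choose $\mathfrak{S}$ so that $\wh{\1}_\mathfrak{S}(\rho)=\E_{s\in \mathfrak{S}}[\rho(s)]$ has a nonzero eigenvalue $\lambda$ that does not appear in the spectrum of $\wh{\1}_\mathfrak{S}(\rho')$ for any other $\rho'\in \wh{H}$.  This can be arranged either by arithmetic separation (the characteristic polynomials of the $\wh{\1}_\mathfrak{S}(\rho')$ are engineered to have disjoint root sets) or by taking a direct product with an auxiliary group to pry the spectra apart.  Under such spectral isolation, the isotypic decomposition of the regular representation forces every $\psi\in V_\lambda$ to have the matrix-coefficient form $\psi(x)=a^*\rho(x)^*b$, where $b$ is a fixed $\lambda$-eigenvector of $\wh{\1}_\mathfrak{S}(\rho)$ and $a$ ranges freely over $\C^{d_\rho}$.

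Third, by Schur orthogonality $\|\psi\|_2=\|a\|\|b\|/\sqrt{d_\rho}$, so with $d_\rho\asymp \log n/(\log\log n)^2$ the required bound $\|\psi\|_\infty/\|\psi\|_2\ge \kappa\sqrt{\log n}/\log\log n$ reduces to producing, for every unit vector $a\in\C^{d_\rho}$, an element $x_a\in H$ with $|\langle \rho(x_a)a, b\rangle|\ge c$ for an absolute constant $c>0$.  Geometrically this asserts that the orbit $\{\rho(x)a:x\in H\}$ passes within constant angular distance of the direction $b$.  A sphere-covering volume count shows that an orbit of size $\exp(\Theta(d_\rho))=n^{\Theta(1/(\log\log n)^2)}$ already suffices, which is much smaller than $|H|=n$; the real work is to arrange $\rho$ and $b$ so that this uniform covering property holds simultaneously for \emph{every} $a$.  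I expect this uniform orbit-covering requirement, in tandem with its compatibility with the spectral isolation of Step~2, to be the principal obstacle and the technical heart of \cite{SSZ22}, because the two desiderata pull in different directions: the algebraic rigidity that makes $\rho$'s orbits cover the sphere uniformly in $a$ tends to force the spectra of the $\wh{\1}_\mathfrak{S}(\rho')$ to coincide across distinct $\rho'$, while the separation condition requires them to differ.
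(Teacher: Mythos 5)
You have not produced a proof, and the paper itself does not contain one either: Theorem~\ref{thm:quote SSZ} is imported from~\cite{SSZ22} (Theorem~1.2 there, with the nonzero-eigenvalue clause taken from that proof), so the only argument the present paper offers is the citation, and the only way to "prove" the statement here is to reproduce the construction of~\cite{SSZ22}. Your sketch correctly identifies the expected shape of such a construction --- a distinguished $\rho\in\wh H$, an eigenvalue $\lambda$ of $\wh{\1_{\mathfrak{S}}}(\rho)$ spectrally isolated from all other $\wh{\1_{\mathfrak{S}}}(\rho')$ so that the $\lambda$-eigenspace is exactly the matrix-coefficient family $\{x\mapsto a^*\rho(x)^*b\}_{a\in\C^{d_\rho}}$ with $b$ fixed, and then a pointwise lower bound on $\sup_x|a^*\rho(x)^*b|$ uniform in $a$ --- but both steps that carry the content, the spectral isolation and the uniform bound $\max_{x\in H}|\langle\rho(x)a,b\rangle|\ge c\,\|a\|\,\|b\|$ for \emph{every} $a$, are explicitly deferred. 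The second of these \emph{is} the theorem; a plan that stops there has a genuine gap rather than a different route.

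There are also concrete quantitative problems with the candidates you propose, beyond incompleteness. Your own reduction, with $d_\rho\asymp\log n/(\log\log n)^2$, forces the orbit $\{\rho(x)^*b\}_{x\in H}$ (of size at most $n$) to pass within a fixed angle of every direction in $\C^{d_\rho}$; the cap-counting estimate you invoke shows only that orbit size $e^{\Theta(d_\rho)}$ is \emph{necessary}, never that a given algebraically structured orbit of that size suffices. Extraspecial $p$-groups cannot even enter this regime: their nonlinear irreducibles have dimension comparable to $\sqrt{|H|}$, not $\log|H|$, so $|H|$ is only polynomial in $d_\rho$ and the covering is impossible by the very count you cite; concretely, in the Schr\"odinger model the orbit of $b$ consists of time--frequency shifts of $b$ up to phase, and if $b$ is a standard basis vector then a flat $a$ has all correlations of size $d_\rho^{-1/2}$, while if $b$ is flat then $a=e_1$ fails symmetrically. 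Moreover $b$ is not a free parameter --- it must be an eigenvector of $\wh{\1_{\mathfrak{S}}}(\rho)$, and $\lambda$ must avoid the spectra of all other $\wh{\1_{\mathfrak{S}}}(\rho')$ --- so the two desiderata are coupled exactly at the point where your sketch ends. As it stands the proposal is a reasonable reading of what~\cite{SSZ22} must accomplish, but it neither reproduces that construction nor supplies an alternative argument, so it cannot replace the citation.
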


The statement of Theorem~1.2 in~\cite{SSZ22}  coincides with Theorem~\ref{thm:quote SSZ}, except that it does not include the  assertion that the eigenvalue is nonzero, but this is stated in the proof of~\cite[Theorem~1.2]{SSZ22}.

\begin{proof}[Deduction of Theorem~\ref{thm:not-quantum-ergodic} from Proposition~\ref{thm:from QE to delocalization}] If Theorem~\ref{thm:not-quantum-ergodic} does not hold, then by Proposition~\ref{thm:from QE to delocalization} for any nonzero eigenvalue $\lambda$ of any finite  Cayley graph there is an eigenfunction $\psi$ of that Cayley graph whose eigenvalue is $\lambda$ and $\|\psi\|_\infty\le \sqrt{2}\|\psi\|_2$. This contradicts Theorem~\ref{thm:quote SSZ}.
\end{proof}

Our proof of Proposition~\ref{thm:from QE to delocalization} uses the following  basic lemma about algebraic numbers; the rudimentary facts from Galois theory and cyclotomic fields that appear in its proof can be found in e.g.~\cite{Lan90}.

\begin{lemma}\label{lem:cyclotomic}
Let $\K$ be a finite degree number field. There exists $\ell=\ell(\K)\in \N$ such that if $p>3$ is a prime that does not divide $\ell$, then $\cos(2\pi j/p)/\cos(2\pi k/p)\notin \K$ for all distinct $j, k\in\{0,1,\ldots,(p-1)/2\}$.
\end{lemma}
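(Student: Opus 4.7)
The plan has two essentially independent pieces: a Galois-theoretic reduction to the case $\K = \Q$, and a short calculation ruling out rational ratios of cosines.

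For the reduction, I would let $M$ be the Galois closure of $\K/\Q$ and define $\ell$ as the product of the (finitely many) rational primes ramifying in $M$; equivalently, $\ell$ divides $|\mathrm{disc}(M)|$. For any prime $p > 3$ with $p \nmid \ell$, $p$ is unramified in $M$. Writing $F_p = \Q(\zeta_p + \zeta_p^{-1})$ for the maximal real subfield of $\Q(\zeta_p)$, the goal is to show $\K \cap F_p = \Q$. This would follow by observing that $L := M \cap \Q(\zeta_p)$ is a subfield of both $\Q(\zeta_p)$ and $M$. Since $\mathrm{Gal}(\Q(\zeta_p)/\Q) \cong (\Z/p\Z)^\times$ is cyclic and $\Q(\zeta_p)/\Q$ is totally ramified at $p$, multiplicativity of ramification indices in towers forces every intermediate field to be totally ramified at $p$; a nontrivial $L \subset \Q(\zeta_p)$ would therefore be ramified at $p$, contradicting $L \subset M$. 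Hence $L = \Q$, and $\K \cap F_p \subset M \cap \Q(\zeta_p) = \Q$.

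Having reduced to $\K = \Q$, I would show that for $p > 3$ prime and distinct $j, k \in \{0, 1, \ldots, (p-1)/2\}$, the ratio $\alpha_j / \alpha_k \notin \Q$, where $\alpha_m := \cos(2\pi m/p)$ (the denominators do not vanish, since $\cos(2\pi k/p) = 0$ would force $4k/p$ to be an odd integer, impossible for $p$ an odd prime and $k$ in this range). If one of $j, k$ equals $0$, the claim reduces to $\alpha_j \notin \Q$ for $1 \le j \le (p-1)/2$, which is immediate because $\alpha_1, \ldots, \alpha_{(p-1)/2}$ are Galois conjugates of $\alpha_1$ generating $F_p$ of degree $(p-1)/2 > 1$. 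Otherwise $j, k \in \{1, \ldots, (p-1)/2\}$, and if $\alpha_j = c\, \alpha_k$ for some $c \in \Q$, I would apply the automorphism $\sigma_a \in \mathrm{Gal}(F_p / \Q)$ corresponding to $a \in (\Z/p\Z)^\times / \{\pm 1\}$, which sends $\alpha_m \mapsto \alpha_{am}$ (with $am$ reduced modulo $p$ and folded into $\{1, \ldots, (p-1)/2\}$ via $\alpha_r = \alpha_{p-r}$). This yields $\alpha_{aj} = c\,\alpha_{ak}$ for all such $a$; summing over the coset representatives $a \in \{1, \ldots, (p-1)/2\}$ and using that multiplication by $j$ (resp.\ $k$) permutes this set modulo $\pm 1$, both sides equal $\sum_{m=1}^{(p-1)/2} \alpha_m = -1/2$. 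Therefore $c = 1$, so $\alpha_j = \alpha_k$ and $j = k$, contradicting distinctness.

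The main substantive ingredient is the ramification argument in the reduction: the only property of $\K$ actually needed is that its discriminant has finitely many prime divisors, and the key cyclotomic input is total ramification of $\Q(\zeta_p)/\Q$ at $p$. Everything beyond that is a short averaging over $\mathrm{Gal}(F_p/\Q)$, with no serious obstacle to expect.
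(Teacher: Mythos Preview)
Your proof is correct, and both halves proceed differently from the paper.

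For the reduction, the paper avoids ramification entirely: it notes that $\K\cap\Q^{\mathrm{cyc}}$ is finitely generated over $\Q$, hence sits inside some $\Q(e^{2\pi i/\ell})$, and then uses the coprimality identity $\Q(e^{2\pi i/\ell})\cap\Q(e^{2\pi i/p})=\Q$ for $p\nmid\ell$, which follows from a degree count via $\varphi(\ell p)=\varphi(\ell)\varphi(p)$. Your ramification route (take $\ell$ to kill the discriminant of the Galois closure and exploit total ramification of $\Q(\zeta_p)/\Q$ at $p$) is equally valid and arguably more conceptual, but it invokes more algebraic-number-theory machinery than the paper's bare-hands cyclotomic argument.

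For the $\K=\Q$ step, the paper argues by contradiction with the minimal polynomial: from $a(\zeta^j+\zeta^{-j})=b(\zeta^k+\zeta^{-k})$ it applies a well-chosen automorphism $\zeta\mapsto\zeta^r$ to force both folded indices strictly below $(p-1)/2$, then multiplies through by a power of $\zeta$ to obtain a nonzero integer polynomial of degree $<p-1$ vanishing at $\zeta$. Your averaging trick---summing the Galois conjugates of the relation $\alpha_j=c\,\alpha_k$ and using $\sum_{m=1}^{(p-1)/2}\cos(2\pi m/p)=-\tfrac12$ to conclude $c=1$---is cleaner and sidesteps the index-juggling the paper needs to keep the polynomial degree small. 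Both arguments ultimately rest on the transitivity of $\mathrm{Gal}(F_p/\Q)$ on $\{\alpha_1,\ldots,\alpha_{(p-1)/2}\}$; yours exploits it via the trace, the paper's via a single automorphism plus irreducibility.
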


\begin{proof}
Denote $\mb{Q}^{\mr{cyc}} = \mb{Q}(\{\exp(2\pi \i/k)\}_{k=1}^\infty)$. Let $\K' = \K\cap\mb{Q}^{\mr{cyc}}\subseteq \K$.
By the primitive element theorem, there exists $\alpha \in \K'$ such that $\K' = \mb{Q}(\alpha)$.
Since $\alpha\in\mb{Q}^{\mr{cyc}}$, there exists $\ell\in \N$ such that $\alpha\in\mb{Q}(\exp(2\pi \i/\ell))$. Therefore,
$\K\cap\mb{Q}^{\mr{cyc}}\subseteq\mb{Q}(\exp(2\pi \i/\ell))$. Observe that $\mb{Q}(\exp(2\pi \i/\ell))\cap\mb{Q}(\exp(2\pi \i/p)) = \mb{Q}$ for any prime $p$ that does not divide $\ell$ (as  the field  generated by $\mb{Q}(\exp(2\pi \i/\ell))$ and $\mb{Q}(\exp(2\pi \i/p))$  is $\mb{Q}(\exp(2\pi \i/(\ell p)))$, and its degree is $\varphi(\ell p) = \varphi(\ell)\varphi(p)$, where $\f(\cdot)$ is Euler's totient function, while  the degrees of $\mb{Q}(\exp(2\pi \i/\ell))$ and $\mb{Q}(\exp(2\pi \i/p))$ are, respectively, $\f(\ell)$ and $\f(p)$). Therefore
\begin{equation}\label{eq:field intersection}
\K\cap\mb{Q}\Big(e^{\frac{2\pi \i}{p}}\Big)
= \big(\K\cap\mb{Q}^{\mr{cyc}}\big)\cap\mb{Q}\Big(e^{\frac{2\pi \i}{p}}\Big) 
 \subseteq\mb{Q}\Big(e^{\frac{2\pi \i}{\ell}}\Big)  \cap \mb{Q}\Big(e^{\frac{2\pi \i}{p}}\Big) 
=\mb{Q}.
\end{equation}

Denoting $\zeta = \exp(2\pi \i/p)$, it follows from~\eqref{eq:field intersection} that if $\cos(2\pi j/p)/\cos(2\pi k/p)=(\zeta^j+\zeta^{-j})/(\zeta^k+\zeta^{-k})\in \K$ for some distinct $j, k\in\{0,1,\ldots,(p-1)/2\}$, then actually in $(\zeta^j+\zeta^{-j})/(\zeta^k+\zeta^{-k})\in \Q$. This cannot happen for the following reason. Suppose that there are $a,b\in\mb{Z}\setminus\{0\}$ for which $a(\zeta^j+\zeta^{-j})-b(\zeta^k+\zeta^{-k}) = 0$.   Given $r\in(\mb{Z}/p\mb{Z})\setminus \{0\}$, we can apply the automorphism of $\mb{Q}(\zeta)$ which maps $\zeta$ to $\zeta^r$. Since $p > 3$, we can choose $r$ so that $jr,kr\not\equiv(p-1)/2\pmod{m}$. We therefore deduce that $a(\zeta^{u}+\zeta^{-u})-b(\zeta^{v}+\zeta^{-v}) = 0$
for some distinct integers $0\le u,v < (p-1)/2$. Without loss of generality, $u < v$. Then $a(\zeta^{u+v}+\zeta^{v-u})-b(\zeta^{2v}+1)=0$. We have thus found  a nonzero  polynomial with integer coefficients of degree $2v < m-1$ that vanishes at $\zeta$, contradicting the fact that the minimal polynomial of $\zeta$ is $P(t) = t^{p-1}+\cdots+t+1$.
\end{proof}

We can now prove Proposition~\ref{thm:from QE to delocalization}, thus completing the proof of  Theorem~\ref{thm:not-quantum-ergodic}.

\begin{proof}[Proof of Proposition~\ref{thm:from QE to delocalization}]   Denote the distinct nonzero eigenvalues of the adjacency matrix $A(H;\mathfrak{S})$ by $\lambda_1,\ldots,\lambda_s\in \R\setminus\{0\}$, and for each $j\in \{1,\ldots,s\}$ let $\Lambda_j\subset \C^H$ be the eigenspace of  $A(H;\mathfrak{S})$ that corresponds to the eigenvalue $\lambda_j$. Also, let $\Lambda_0\subset \C^H$ be the kernel of $A(H;\mathfrak{S})$. Define
\begin{equation*}\label{eq:def M}
M=\max \bigg\{\inf_{\psi\in \Lambda_1\setminus \{0\}} \frac{\|\psi\|_\infty}{\|\psi\|_2},\ldots,\inf_{\psi\in \Lambda_s\setminus \{0\}} \frac{\|\psi\|_\infty}{\|\psi\|_2}\bigg\}.
\end{equation*}
The desired conclusion of Proposition~\ref{thm:from QE to delocalization} is the same as requiring that $M\le \sqrt{2(1+ 2|H|^3\e)}$. If $M\le \sqrt{2}$, then there is nothing to prove, so suppose from now on that $M>\sqrt{2}$.

Let $\ell$ be as in Lemma~\ref{lem:cyclotomic} applied to the field $\K=\Q(\lambda_1,\ldots,\lambda_s)$. Fix a prime $p>3$ that does not divide $\ell$ and let $G=H\times (\Z/p\Z)$ be as in the statement of Proposition~\ref{thm:from QE to delocalization}. For $k\in \Z$ denote $\mu_k=2\cos(2\pi k/p)$. As $p$ is odd, $\mu_k\neq 0$. Write $\chi_k(x)=\exp(2\pi\i kx/p)$ for  $x\in \Z/p\Z$ and let $E_k$ be the span of $\chi_k$ and $\chi_{-k}$ in $\C^{\Z/p\Z}$. Then, $\dim(E_0)=1$ and $\dim(E_k)=2$ for $k\in \{1,\ldots,(p-1)/2\}$, and $E_k$ is the eigenspace of $A(\Z/p\Z;\{-1,1\})$ whose eigenvalue is $\mu_k$. As $p$ is odd, the eigenspace decomposition of $A(\Z/p\Z;\{-1,1\})$ is
$$
\C^{\Z/p\Z}=\bigoplus_{k=0}^{\frac{p-1}{2}} E_k.
$$

The nonzero eigenvalues of $A(G,\mathfrak{S}\times \{-1,1\})$ are $\{\lambda_j\mu_k:\ (j,k)\in \{1,\ldots,s\}\times \{0,\ldots,(p-1)/2\}\}$; we claim that these numbers are distinct, so that the eigenspace decomposition of $A(G,\mathfrak{S}\times \{-1,1\})$ is
$$
\C^G\cong \C^H\otimes \C^{\Z/p\Z}= \Big(\Lambda_0\otimes \C^{\Z/p\Z}\Big)\bigoplus \Big( \bigoplus_{j=1}^s \bigoplus_{k=0}^{\frac{p-1}{2}} \Lambda_j\otimes E_k\Big).
$$
Indeed, if $j,j'\in \{1,\ldots,s\}$ and $k,k'\in \{1,\ldots, (p-1)/2\}$ are such that $\lambda_j\mu_k=\lambda_{j'}\mu_{k'}$, then $\mu_k/\mu_{k'}=\lambda_{j'}/\lambda_j\in \K$, so $k=k'$ by Lemma~\ref{lem:cyclotomic} and therefore also $j=j'$.  

Fix  $j\in \{1,\ldots,s\}$ at which  $M$ is attained, namely $\|\psi\|_\infty\ge M\|\psi\|_2$ for every $\psi\in \Lambda_j$. Let $\phi:G\to \C$ be an eigenfunction of $A(G,\mathfrak{S}\times \{-1,1\})$ whose eigenvalue is  $\lambda_j\mu_k$ for some $k\in \{0,\ldots, (p-1)/2\}$. So, $\phi \in \Lambda_j\otimes E_k$ and therefore there exist  $\psi_+,\psi_{-}\in \Lambda_j$ with $\|\psi_+\|_2^2+\|\psi_-\|_2^2=\|\phi\|_2^2$ such that $\phi=\psi_+\otimes \chi_k+\psi_{-}\otimes\chi_{-k}$. There is $\psi\in \{\psi_+,\psi_-\}$ with $\|\psi\|_2^2\ge \|\phi\|_2^2/2$. Fix  $h_\phi\in H$ for which $|\psi(h_\phi)|=\|\psi\|_\infty$. Then,
\begin{multline*}
\E_{x\in \Z/p\Z}\big[|\phi(h_\phi,x)|^2\big]=\E_{x\in \Z/p\Z} \Big[\big|\psi_+(h_\phi)e^{\frac{2\pi\i k x}{p}}+\psi_-(h_\phi)e^{-\frac{2\pi\i k x}{p}}\big|^2\Big]\\=|\psi_+(h_\phi)|^2+|\psi_-(h_\phi)|^2\ge |\psi(h_\phi)|^2=\|\psi\|_\infty^2\ge M^2\|\psi\|_2^2\ge \frac{M^2}{2}\|\phi\|_2^2.
\end{multline*}

If $\mathcal{B}\subset \C^G$ is an orthonormal eigenbasis of $A(G,\mathfrak{S}\times \{-1,1\})$, then let $\mathcal{B}'\subset \mathcal{B}$ be the elements of $\mathcal{B}$ whose  eigenvalue is $\lambda_j\mu_k$ for some $k\in \{0,\ldots, (p-1)/2\}$. Thus, $|\mathcal{B}'|=\dim(\Lambda_j)p\ge p$. By the pigeonhole principle there are $\mathcal{B}''\subset \mathcal{B}'$ and $h\in H$ such that $|\mathcal{B}''|\ge |\mathcal{B}'|/|H|\ge p/|H|$ and  $h_\phi=h$ for every $\phi\in \mathcal{B}''$. Consequently,
\begin{align}\label{eq:drop terms B''}
\begin{split}
\EE_{\phi \in \cB} \Big[\big| \EE_{x \in G} \big[&\1_{\{h\}\times \Z/p\Z}(x) |\phi(x)|^2\big] - \mb{E} \1_{\{h\}\times \Z/p\Z}\big|\Big]=\frac{1}{p|H|^2} \sum_{\phi\in \mathcal{B}} \Big|\E_{x\in \Z/p\Z}\big[|\phi(h_\phi,x)|^2\big]-1\Big|\\ &
\ge \frac{1}{p|H|^2}  \sum_{\phi\in \mathcal{B}''} \Big|\E_{x\in \Z/p\Z}\big[|\phi(h_\phi,x)|^2\big]-1\Big|\ge \frac{|\mathcal{B}''|}{p|H|^2}\left(\frac{M^2}{2}-1\right)\ge \frac{1}{|H|^3}\left(\frac{M^2}{2}-1\right).
\end{split}
\end{align}
If $\mathcal{B}$ is $\e$-quantum ergodic, then the first term in~\eqref{eq:drop terms B''} is at most $\e$, and therefore $M\le \sqrt{2(1+|H|^3\e)}$.
\end{proof}

\bibliographystyle{amsplain0.bst}
\bibliography{QE.bib}

\end{document}